\theoremstyle{plain}
   \def\MR#1{}
\newtheorem{thm}{Theorem}[section]
\newtheorem{dfn}[thm]{Definition}
\newtheorem{lemma}[thm]{Lemma}
\newtheorem{prop}[thm]{Proposition}
\newtheorem{cor}[thm]{Corollary}
\newtheorem{question}[thm]{Question}
\newtheorem{problem}[thm]{Problem}
\newtheorem{THM}{Theorem}
\theoremstyle{remark}
\newtheorem{remark}[thm]{Remark}
\newcommand{\mb}{\mathbb}
\newcommand{\tor}{\mathrm{tor}}
\newcommand{\C}{\mb C}
\DeclareMathOperator{\Pic}{Pic}
\DeclareMathOperator{\Aff}{Aff}
\DeclareMathOperator{\Res}{Res}
\DeclareMathOperator{\Spec}{Spec}
\DeclareMathOperator{\utype}{utype}
\DeclareMathOperator{\uclass}{uclass}
\DeclareMathOperator{\alg}{alg}
\numberwithin{equation}{section}
\numberwithin{equation}{section}       
\title{On the formal principle for curves on projective surfaces}
\author{Jorge Vit\'orio Pereira}
\author{Olivier Thom}
\date{\today}
\begin{document}

\begin{abstract}
We prove that the formal completion of a complex projective surface along a  rigid
smooth curve with trivial normal bundle determines the birational equivalence class of
the surface.
\end{abstract}

\maketitle
\setcounter{tocdepth}{1}

\section{Introduction}

In this paper, we investigate pairs $(X,Y)$ of complex varieties where
$Y$ is a compact subvariety of the complex variety $X$. We are particularly
interested in the analytic classification of such pairs when $X$ is a smooth projective
surface.

\begin{dfn}
    A pair $(X,Y)$ satisfies the {\bf formal principle} if for any other pair $(X',Y')$ such that
    the formal completion $\mathscr Y$ of $X$ along $Y$ is formally isomorphic to the formal
    completion $\mathscr Y'$ of $X'$ along $Y'$ then the germ of $X$ along $Y$ is biholomorphic
    to the germ of $X'$ along $Y'$.
\end{dfn}

If $Y$ is a smooth compact curve on a smooth surface $X$ with non-zero self-intersection then
the pair $(X,Y)$ satisfies the formal principle. Indeed, if $Y^2<0$ then  \cite[Section 4, Satz 6]{Grauert} implies that $(X,Y)$ satisfies the formal principle. When $Y^2>0$, then the result is implied by \cite{GrauertCommichau}, see also the discussion in \cite[Section 4]{Kosarew}. The case of zero self-intersection is in sharp contrast. To the best of our knowledge, the first example of a pair $(X,Y)$ for which the formal principle does not hold is due to  V.I. Arnold: it consists of a germ of surface $X$ containing an elliptic curve of zero self-intersection and non-torsion normal bundle obtained through the suspension of a germ of non-linearizable  biholomorphism, see \cite{Arnold}. Even if one restricts to neighborhoods of elliptic curves with trivial normal bundles,  the analytic classification differs considerably from the formal classification, see  \cite[Theorem 5]{LorayThomTouzet}.

There are many more works investigating the formal principle. We invite the reader to consult the recent paper \cite{MR3968811} and the surveys in \cite{Kosarew} and  \cite[Section VII.4]{SCVVII} to get a  view of different directions of research on the subject.

\subsection{Projective formal principle} The results just mentioned provide an abundance of pairs for
which the formal principle does not hold. They are based on local analytic construction and do not globalize. Indeed, Neeman in \cite[Article 1, Theorem 6.12]{Neeman} shows that a smooth elliptic curve $Y$ with trivial normal bundle  on a projective surface $X$ either is a  fiber of a fibration, or $X$ is birationally equivalent to $\mathbb P(E)$, the projectivization of the unique rank two vector bundle over $Y$ obtained as a non-trivial extension of the trivial line-bundle by itself, and $Y$ corresponds to the natural section $Y \to \mathbb P(E)$.

Taking into account this result, it seems natural to consider the following restricted version of the formal principle.

\begin{dfn}
    A pair $(X,Y)$ satisfies the {\bf projective formal principle} if $X$ is a projective variety
    and for any other pair $(X',Y')$ such that $X'$ is projective and the formal completion $\mathscr Y$
    of $X$ along $Y$ is formally isomorphic to the formal completion $\mathscr Y'$ of $X'$ along $Y'$
    then the germ of $X$ along $Y$ is biholomorphic to the germ of $X'$ along $Y'$. Furthermore, we
    will say that $(X,Y)$ satisfies the {\bf birational formal principle} when, under the same assumptions
    as above, there exists a birational map between $X$ and $X'$ which sends, biregularly, a neighborhood of  $Y$ to a neighborhood of $Y'$.
\end{dfn}

\subsection{Smooth curves on projective surfaces}
Our first main result says that the projective formal principle holds for smooth curves with trivial normal bundle on projective surfaces.

\begin{THM}\label{THM:A}
    Let $(S,C)$ be a pair where $S$ is a smooth projective surface and $C$ is a smooth curve contained in $S$. If the normal bundle of $C$ in $S$ is trivial then $(S,C)$ satisfies the projective formal principle. Moreover, if $C$ is not a fiber of a fibration in $S$ then $(S,C)$ satisfies the birational formal principle.
\end{THM}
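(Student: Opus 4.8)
The plan is to show that the formal completion $\mathscr{Y}$ determines the field $\mathbb C(S)$ of rational functions on $S$, together with its subring of functions regular along $C$, and that this reconstruction is canonical enough to be transported across a formal isomorphism. To set up, I would first note that a formal isomorphism $\mathscr{Y}\cong\mathscr{Y}'$ restricts to an isomorphism of first-order neighborhoods, hence induces an isomorphism $C\cong Y'$ under which $N_{Y'/X'}\cong N_{C/S}\cong\mathcal O_C$; thus $(X',Y')$ satisfies the same hypotheses, and $Y'$ is again not a fiber. It therefore suffices to recover the birational class of $S$ from $\mathscr Y$ and to match it, along $C$, with the formal data.

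I would then use the non-fiber hypothesis to pin down the global geometry. Since $N_{C/S}=\mathcal O_C$, the sequence $0\to\mathcal O_S\to\mathcal O_S(C)\to\mathcal O_C\to 0$ gives $\dim|C|\le 1$, and $\dim|C|=1$ would force (using $C^2=0$, so that general members are disjoint and $|C|$ is base-point free) the curve $C$ to be a fiber of a fibration. In our case, therefore, $C$ is rigid and $h^0(\mathcal O_S(C))=1$. For $C$ elliptic, Neeman's classification quoted above makes $S$ birational to the ruled surface $\mathbb P(E)\to C$ attached to the Atiyah bundle, with $C$ the distinguished section; I would establish, or import, the analogous statement for arbitrary genus, namely that a rigid curve with trivial normal bundle forces $S$ to be birational to $\mathbb P(E_C)$, where $E_C$ is the nonsplit self-extension of $\mathcal O_C$ and $C$ is the section coming from the sub-bundle $\mathcal O_C\subset E_C$. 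In this model $\mathbb C(S)=\mathbb C(C)(t)$ is the function field of the ruling.

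The heart of the argument is to reconstruct this function field intrinsically from $\mathscr Y$, i.e. to prove that $C$ is \emph{G3}: every formal-rational function along $C$ is the restriction of a genuine rational function on $S$, so that the field $\mathcal K(\mathscr Y)$ of formal-rational functions equals $\mathbb C(S)$. The point is that rigidity is exactly the failure of $\mathscr Y$ to be a product: were the neighborhood formally $C\times\widehat\Delta$, the curve would move and $\mathcal K(\mathscr Y)$ would be enormous, but the nonsplit (Atiyah) twisting of $E_C$ constrains a formal power series in the transverse coordinate with coefficients on $C$ to be compatible across the twist, forcing it to be rational in $t$. Granting this, the formal isomorphism yields $\mathbb C(S)=\mathcal K(\mathscr Y)\cong\mathcal K(\mathscr Y')=\mathbb C(X')$, hence a birational map $\phi\colon S\dashrightarrow X'$; since $\phi$ agrees with the formal isomorphism along $C$, and the latter is an honest isomorphism carrying formally regular functions to formally regular functions, $\phi$ has no indeterminacy along $C$ and restricts to a biregular map from a neighborhood of $C$ onto a neighborhood of $Y'$. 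This gives the birational formal principle, and hence also the projective one in the non-fiber case; the fiber case of the projective principle I would dispatch separately, using that near a smooth fiber the family structure is unobstructed, so the formal neighborhood already determines the germ.

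I expect the main obstacle to be the G3 statement, because $C^2=0$ means $C$ is nef but not ample, so the classical positivity-based criteria of Hironaka–Matsumura do not apply. The substitute for positivity must be rigidity together with the global ruled structure: one must show that the obstructions to a formal-rational function being algebraic, organized in a tower of cohomological obstructions governed by the Atiyah class of $E_C$, vanish precisely because the extension is nonsplit. A secondary difficulty is supplying the higher-genus analog of Neeman's theorem, and verifying that the birational correspondence with $\mathbb P(E_C)$ can be chosen to be an isomorphism in a neighborhood of the section, so that every birational modification takes place away from $C$.
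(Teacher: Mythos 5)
Your proposal collapses at the step you yourself identify as its heart: the claim that $C$ is G3, i.e.\ that the field $\mathcal{K}(\mathscr Y)$ of formal meromorphic functions along $C$ equals $\mathbb{C}(S)$. This is not merely hard to prove in the absence of positivity --- it is false in exactly this setting. As the paper recalls immediately before Corollary \ref{C:transcendence}, by \cite[Section 5]{Hironaka-Matsumura} the field of formal meromorphic functions on the completion of a surface along a curve with $C^2=0$ and finite Ueda type has \emph{infinite} transcendence degree over $\mathbb{C}$, so it can never coincide with $\mathbb{C}(S)$, whose transcendence degree is two. The obstruction is not a failure of the Hironaka--Matsumura ampleness criteria that rigidity or the nonsplit twisting of $E_C$ could circumvent: the formal neighborhood genuinely carries an enormous supply of formal meromorphic functions, and no cohomological tower governed by the Atiyah class will cut that field down to $\mathbb{C}(S)$. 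What is true is the statement for \emph{convergent} functions: meromorphic functions on a Euclidean neighborhood of $C$ extend to rational functions on $S$ (Theorem \ref{T:Ueda} combined with Andreotti, Item (\ref{I:sections}) of Corollary \ref{C:UedaProjective}). But to apply that to the given formal isomorphism one must first prove that the formal isomorphism \emph{converges}, and that convergence is precisely the crux of the theorem, which your field-theoretic reconstruction silently assumes away. The paper's proof attacks exactly this point by a different mechanism: from Theorem \ref{THM:closed} it extracts a global holomorphic $1$-form $\alpha$ (representing the Ueda class, generically transverse to $C$) and a closed rational $1$-form $\omega$ with polar divisor $2C$ and no residues; it shows, using Lemma \ref{L:restriction 1}, that any formal isomorphism carries the corresponding pair $(\alpha',\omega')$ to constant multiples of $(\alpha,\omega)$; it then deduces convergence from Artin approximation (Lemma \ref{L:converge}); and only after convergence does it invoke Ueda theory to extend the germ to a birational map, ruling out indeterminacy and degree $>1$ via Item (\ref{I:Narasimhan}) of Corollary \ref{C:UedaProjective}.

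A secondary, independent gap is your reduction to a ``higher-genus Neeman theorem.'' Neeman's classification \cite{Neeman} is specific to elliptic curves and cannot simply be imported; moreover, for genus $g\ge 2$ there is no distinguished ``nonsplit self-extension $E_C$,'' since $\mathrm{Ext}^1(\mathcal{O}_C,\mathcal{O}_C)\simeq H^1(C,\mathcal{O}_C)$ has dimension $g$ and different extension classes yield in general non-isomorphic ruled surfaces, so even the statement of your structural claim is ill-posed as written (one would have to single out the extension determined by the Ueda class, and then prove the birational equivalence --- a substantial theorem in its own right, nowhere in the literature). The paper needs no such classification: its argument runs entirely on the existence of the two closed $1$-forms above and never on a birational model of $S$.
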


When the curve $C$ is a fiber of a fibration on a smooth surface $S$ (projective or just a germ of), a result by Hirschowitz \cite{Hirschowitz}, see also \cite[Theorem 2.2]{Kosarew}, says that the pair $(S,C)$ satisfies the formal principle. Theorem \ref{THM:A} adds nothing to this statement. The real content of it is when $C$ is not a fiber of a fibration. Its proof is built on  the existence of
natural closed rational $1$-forms with polar set equal to $C$ \cite[Theorem B]{CLPT}(a result which uses basic Hodge Theory for compact Kähler manifolds) to guarantee the convergence of any given formal isomorphism, and we use a result by Ueda to extend the (now convergent) isomorphism to birational  maps.

\subsection{On the Ueda type of hypersurfaces on projective manifolds}
Our second main result is a common generalization of \cite[Theorem B]{CLPT}, \cite[Theorem 5.6]{Neeman}, and \cite[Theorem 2.3]{LPTFlat}.
Although \cite[Theorem B]{CLPT} is sufficient to prove Theorem \ref{THM:A}, the result below seems to be of independent interest,
and it might prove to be useful to investigate the projective formal principle in different situations.

\begin{THM}\label{THM:closed}
    Let $D$ be a connected divisor on a compact Kähler manifold $X$. Assume the existence
    of a line bundle $\mathcal L \in \Pic^{\mathrm{tor}}(X)$ and of a positive integer $k$ such that
    $\mathcal O_X(kD)_{||D|} \simeq \mathcal L_{||D|}$. Then the following assertions hold true:
    \begin{enumerate}
        \item If $\mathcal O(kD)_{|kD + D_{red} }  \simeq \mathcal L_{|kD + D_{red}}$ then, perhaps after replacing
        $X$ by a degree two  étale covering, there exists a global closed logarithmic $1$-form $\omega$ with purely imaginary periods such that
        \[
            \Res \omega_{|U} = D
        \]
        where $U$ is a  sufficiently small neighborhood of $|D|$.
        \item If $\mathcal O(kD)_{|kD}  \simeq \mathcal L_{|kD}$ then there exists a global closed meromorphic  $1$-form $\omega$
        with coefficients in $\mathcal L^*$, without residues, and with polar divisor equal to $kD + D_{red}$.
    \end{enumerate}
\end{THM}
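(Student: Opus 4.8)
The plan is to produce both forms by taking the logarithmic derivative of the canonical section of $\mc O_X(kD)$ against a connection that is flat, to the prescribed order, along $D$, and then to globalize the resulting formal object by Hodge theory. Write $\F := \mc O_X(kD)\otimes\mc L^*$ and let $s_0\in H^0(X,\mc O_X(kD))$ be the canonical section, so that $\Div(s_0)=kD$. Since $\mc L\in\Pic^{\tor}(X)$, it is associated to a finite-order unitary character of $\pi_1(X)$ and hence carries a flat unitary connection $\nabla^{\mc L}$; fix it. The two hypotheses are exactly statements about the order to which $\F$ is infinitesimally trivial along $D$: in case (1) a trivializing section $\tau$ of $\F$ is prescribed on the thickening $kD+D_{\mathrm{red}}$, while in case (2) only on $kD$. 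Connectedness of $D$ guarantees that residues, which are constant on each component, are coherently determined.

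The first step is the construction near $|D|$. Combining the (formal) holomorphic trivialization $\tau$ of $\F$ with the flat unitary $\nabla^{\mc L}$ equips $\mc O_X(kD)=\F\otimes\mc L$ with a connection $\nabla$ that is flat modulo the ideal of the given thickening. For case (1) I would set $\omega := \tfrac1k\,\dfrac{\nabla s_0}{s_0}$. In a flat frame $\nabla s_0$ reduces to the ordinary differential of a local equation of $kD$, so $\omega$ is a logarithmic $1$-form with $\Res\omega=\tfrac1k\Div(s_0)=D$; its failure to be closed is governed by the curvature of $\nabla$, which is absorbed precisely because the triviality on the \emph{extra} $D_{\mathrm{red}}$ matches it against the order of the pole of $s_0^{-1}$, yielding an honestly closed formal logarithmic form along $D$. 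For case (2) the weaker triviality costs one order: there the natural object is the residue-free twisted form $\tau\cdot\nabla(s_0^{-1})$, a formal section of $\Omega^1_X\otimes\mc L^*$ whose leading term $d(s_0^{-1})$ has polar divisor exactly $kD+D_{\mathrm{red}}$ and no residue, the error terms from $\nabla^{\F^*}\tau^*=O(I_{kD})$ being strictly less singular; the only local point to verify is that no residue is reintroduced by those errors, which is exactly what the order-$kD$ triviality secures.

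The heart of the matter, and where I expect the main obstacle, is passing from these formal data along $D$ to genuine global forms on $X$; this is where the Kähler hypothesis is essential and where the argument generalizes \cite[Theorem B]{CLPT}. I would realize $\nabla$ through the Chern connection of a hermitian metric on $\mc O_X(kD)$, producing a global smooth form whose $(0,1)$-component is $\bar\partial$-closed and represents a class in the relevant Dolbeault group; invoking the Hodge decomposition and the $\partial\bar\partial$-lemma on the compact Kähler manifold $X$, one corrects this class by a global potential to obtain a genuine closed holomorphic logarithmic $1$-form (case (1)) or a global $\mc L^*$-twisted meromorphic $1$-form (case (2)). For case (1) the correction is allowed to create poles away from a small neighborhood $U$ of $|D|$—this is why the conclusion only constrains $\Res\omega$ on $U$, and it is what makes the extension possible without forcing $[D]=0$ in $H^{1,1}(X)$. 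The unitarity of the flat structure on $\mc L$ forces the periods to be purely imaginary, and any residual real part of the periods is removed by subtracting a global holomorphic $1$-form, using that on a compact Kähler manifold imaginary-period closed forms are exactly those of the shape $\eta-\bar\eta$ with $\eta$ holomorphic. The delicate technical points are that the $\partial\bar\partial$-correction alters neither the residue along $D$ nor the polar order, and that the obstruction class genuinely lies in the subspace killable by the lemma.

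Finally, I would account for the degree-two étale covering in assertion (1) as the resolution of a root/sign ambiguity intrinsic to the construction: extracting from the $k$-th power datum on $\F$ a logarithmic form with residue exactly $D$ amounts to choosing a root of the flat trivialization, whose monodromy is a priori valued in a finite cyclic group, and the unavoidable $\{\pm1\}$-part of this ambiguity means the form is naturally a section of $\Omega^1_X(\log D)$ twisted by a $2$-torsion line bundle, which becomes trivial on the associated double cover. I would carry this out last, producing the global form on the cover and checking that it descends up to this $2$-torsion twist; pinning down precisely when the cover is genuinely forced, rather than an artifact of the normalization, is the subtlest bookkeeping in the proof.
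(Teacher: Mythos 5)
Your treatment of assertion (1) --- and in particular your explanation of the degree-two \'etale cover --- misses the actual mechanism and would fail in the essential case. The hypothesis of (1) does give, as you say, a formal closed logarithmic $1$-form along the thickening of $D$; but the whole content of the theorem is the globalization, and the obstruction to it is not a root or sign ambiguity. In the paper, the trivialization on $kD$ is encoded by local equations $f_i$ and unitary constants $\lambda_{ij}$ satisfying $f_i^k = (\lambda_{ij} + f_j^k r_{ij})f_j^k$, and the functions $a_{ij} = f_i^{-k} - \lambda_{ij}^{-1} f_j^{-k}$, which are holomorphic on overlaps, form a \v{C}ech cocycle with class in $H^1(X,\mathcal L^*)$. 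Under the stronger hypothesis of (1) this cocycle restricts to zero on $|D|$, and the proof is a dichotomy on its class in $H^1(X,\mathcal L^*)$. If the class vanishes, one may take $a_{ij}=0$, i.e.\ $f_i^k = \lambda_{ij} f_j^k$, and then $\tfrac1k\, d\log f_i^k$ is already a well-defined global logarithmic form on $X$ with residue $D$ and purely imaginary periods: no root is extracted and no cover is needed. If the class does not vanish, the restriction $H^1(X,\mathcal L^*)\to H^1(|D|,\mathcal L^*)$ is non-injective, and Proposition~\ref{P:restriction} (via affine representations $\pi_1(X)\to\Aff(\C)$ and Malcev's theorem) shows that $\pi_1^{\alg}(|D|)\to\pi_1^{\alg}(X)$ is not surjective; Proposition~\ref{P:pi1} then produces an \'etale cover on which $D$ becomes \emph{disconnected}, and the logarithmic form comes from the Hodge index theorem applied to the two resulting components (or from Totaro's fibration theorem when there are three or more). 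The cover is needed precisely so that $D$ splits as $D_1 + D_2$ and a form with residues of opposite signs along the two components can exist; near one component this has residue $D$. Although a posteriori that form is anti-invariant and so can be regarded as a $2$-torsion-twisted object downstairs, your route cannot reach it: producing it requires detecting the non-vanishing of the class in $H^1(X,\mathcal L^*)$ and the resulting splitting of $D$ on a cover, neither of which appears in your root-extraction bookkeeping. Your proposal, as written, proves only the unobstructed half of the dichotomy, where in fact no cover is required at all.

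A second, more repairable, gap concerns the globalization itself. Your instinct that Hodge theory on the compact K\"ahler $X$ is the globalizing engine is correct and matches the paper, which uses the twisted $\partial\overline\partial$-lemma (Lemma~\ref{L:Beauville}, the vanishing of $H^q(\nabla)$). But your implementation --- realize $\nabla$ by a Chern connection and correct the resulting smooth form by global potentials --- does not yield a meromorphic form: $\partial\overline\partial$-corrections are only smooth, and nothing in your argument keeps the output holomorphic off $|D|$ with polar divisor exactly $kD + D_{\mathrm{red}}$; you flag this as ``delicate,'' but it is where the construction lives or dies. The paper stays entirely within holomorphic \v{C}ech data: Lemma~\ref{L:Beauville} is applied to the cocycle $\{da_{ij}\}$, whose class in $H^1(X,\Omega^1_X\otimes\mathcal L^*)$ therefore vanishes, giving holomorphic $1$-forms $\alpha_i$ with $da_{ij} = \alpha_i - \lambda_{ij}^{-1}\alpha_j$; the forms $\omega_i = d(f_i^{-k}) - \alpha_i$ then glue into the global $\mathcal L^*$-valued meromorphic $1$-form with the stated polar divisor and no residues, and its closedness is proved by a Stokes argument over a shrinking tubular neighborhood of $|D|$. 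To repair assertion (2) in your write-up, replace the metric/Chern-connection step by this \v{C}ech computation, which is what actually guarantees that the Hodge-theoretic correction is holomorphic and leaves the poles untouched.
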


In the statement of Theorem \ref{THM:closed}, $\Pic^{\mathrm{tor}}(X)$ denotes the group
of isomorphism classes of line-bundles on $X$ with torsion Chern class. Also, in Item (1) of the statement above, the periods of $\omega$ are the integrals of $\omega$ along closed real curves not intersecting the polar locus of $\omega$. Throughout the paper, starting in the statement above, we will say by abuse of language
that a meromorphic form on a projective manifold with coefficients in a line-bundle $\mathcal L\in \Pic^{\mathrm{tor}}(X)$
is closed, if it is $\nabla$-closed for  the unique flat unitary connection $\nabla$ on $\mathcal L$.

\subsection{Acknowledgements} J.V. Pereira thanks Jun-Muk Hwang for calling his attention
to the literature on the formal principle. Both J.V. Pereira and O. Thom are grateful to Frank Loray for helpful discussions and to the anonymous referee for the careful reading and thoughtful suggestions. J. V. Pereira was supported by Cnpq and FAPERJ. O. Thom was  supported by Cnpq.
Both authors acknowledge support from CAPES-COFECUB Ma 932/19 project.

\section{Ueda theory}

In this section, we recall the definitions of Ueda type and Ueda class of smooth divisors with
topologically trivial normal bundle. We adopt the point of view presented by Neeman in \cite{Neeman},
and try to be coherent with notations used in \cite{CLPT}.

\subsection{Ueda line bundle}
Let $Y$ be a smooth irreducible compact hypersurface on a complex manifold $U$.
Assume that the normal bundle $N_Y$ is topologically torsion and that $Y$ and $U$ share the same homotopy type.
Let us assume the existence of a flat unitary connection on $N_Y$. This condition is automatically fulfilled if
$Y$ is Kähler. The monodromy representation $\rho_Y : \pi_1(Y) \to S^1 \subset \mathbb C^*$  of this unitary connection
induces a representation of $\pi_1(U)$ into $S^1$ since we are assuming that $Y$ and $U$ have the same homotopy
type. We will denote by $\widetilde{N_Y}$ the (flat unitary) line bundle on $U$ determined by the extension
of $\rho_Y$ to $\pi_1(U)$. Notice that $\widetilde{N_Y}$ extends $N_Y$ in the sense that $\widetilde{N_Y}_{|Y} = N_Y$.

The line-bundle $\mathcal O_U(Y)$ is another extension of $N_Y$ to $U$. The {\bf Ueda line bundle} is, by definition, the
line-bundle $\mathcal U = \mathcal O_U(Y) \otimes \widetilde{N_Y}^*$.

\subsection{Ueda type and Ueda class}
Let $I\subset \mathcal O_U$ be the ideal sheaf  defining $Y$.
We will denote  the $k$-th infinitesimal neighborhood of $Y$ in $U$ by $Y(k)$, \emph{i.e.}
\[
    Y(k) = \Spec ( \mathcal O_U / I^{k+1} ).
\]
The {\bf Ueda type}  of $Y$ ($\utype (Y)$) is equal to $\infty$ if $\mathcal U_{|Y(\ell)} \simeq \mathcal O_{Y(\ell)}$ for every non-negative integer $\ell$, otherwise
is the smallest positive integer $k$ such that $\mathcal U_{|Y(k)} \not\simeq \mathcal O_{Y(k)}$. In other words, the Ueda type is infinite if and only if the restriction
of $\mathcal U$ to $\mathscr Y$, the formal completion of $U$ along $Y$, is trivial.

If $\utype(Y) = k < \infty$ then the {\bf Ueda class} of $Y$ is defined as the element in the cohomology group $H^1(Y, I^{k}/I^{k+1})$ mapped  to $\mathcal U_{|Y(k)} \in \Pic(Y(k))$
through the truncated exponential sequence
\begin{center}
\begin{tikzcd}
 H^1(Y, I^{k}/I^{k+1}) \arrow[r, "\exp" ] &  \Pic (Y(k))\arrow[rr,"\rm{restriction}"] && \Pic(Y(k-1)) \, .
\end{tikzcd}
\end{center}
Indeed, under the assumption that $\mathcal U_{|Y(k-1)} \in \Pic(Y(k-1))$ is trivial, Neeman shows that map on the left is injective \cite[Remark 1.7]{Neeman} and one gets a well-defined class in $H^1(Y, I^{k}/I^{k+1})$.

For a more concrete interpretation of the Ueda type and Ueda class in terms of defining equations for $Y$ and the associated cocycles, we
invite the reader to consult Ueda's original definition in  \cite{Ueda} and the discussion carried out in \cite[Section 2.1]{CLPT}. Here
we point out the following characterization of neighborhoods where $\utype(Y)= \infty$.

\begin{lemma}\label{L:log unitary}
    Notation as above. The Ueda type of $Y$ is infinite if, and only if, there exists a (formal) closed logarithmic
    $1$-form $\omega \in H^0(\mathscr Y, \Omega^1_{\mathscr Y}(\log Y))$ with $\Res(\omega) = Y$ and purely imaginary
    periods.
\end{lemma}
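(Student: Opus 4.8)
The plan is to establish the equivalence between $\utype(Y) = \infty$ and the existence of a formal closed logarithmic $1$-form with the stated properties by working with explicit local defining equations and their transition functions, following Ueda's original cocycle formalism. The key object is the flat unitary structure on $N_Y$: its monodromy representation $\rho_Y$ provides, after choosing a suitable local framing, transition data that I can compare against the transition data coming from actual defining equations of $Y$ on the formal neighborhood $\mathscr Y$.

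\medskip

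For the direction where $\utype(Y)=\infty$ implies existence of $\omega$, I would argue as follows. Saturating the triviality of $\mathcal U$ on every infinitesimal neighborhood means that on $\mathscr Y$ the Ueda line bundle is formally trivial, so $\mathcal O_{\mathscr Y}(Y) \simeq \widetilde{N_Y}_{|\mathscr Y}$ as formal line bundles. Cover $Y$ by opens $U_i$ with local equations $f_i$ for $Y$; the isomorphism above furnishes a compatible family of nowhere-vanishing formal functions $g_i$ such that $f_i = g_i f_j \cdot (\text{transition of } \widetilde{N_Y})$ with unitary (constant modulus) transitions $c_{ij} \in S^1$. After renormalizing $f_i \mapsto g_i^{-1} f_i$, I obtain formal defining equations whose transition functions are exactly the unitary constants $c_{ij}$, so the logarithmic form $\omega_i = \frac{df_i}{f_i}$ glues to a \emph{global} closed logarithmic $1$-form on $\mathscr Y$: indeed $\frac{df_i}{f_i} - \frac{df_j}{f_j} = \frac{d c_{ij}}{c_{ij}} = 0$ on overlaps. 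By construction $\Res(\omega) = Y$, and the periods of $\omega$ are computed from the logarithm of the monodromy $\rho_Y$, which takes values in $S^1$; hence the periods lie in $2\pi i \, \mathbb R$, i.e. they are purely imaginary.

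\medskip

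For the converse, suppose such an $\omega$ exists on $\mathscr Y$. Locally $\omega = \frac{df_i}{f_i}$ for some formal defining equation $f_i$ of $Y$ (this is exactly what $\Res(\omega)=Y$ together with the logarithmic pole condition gives), and closedness forces the transition functions between the $f_i$ to be \emph{constants} $c_{ij}$. The purely-imaginary-periods hypothesis forces $|c_{ij}|$ to define the trivial cocycle, so one may rescale to arrange $c_{ij} \in S^1$; these constants are precisely the monodromy data of the flat unitary extension $\widetilde{N_Y}$. Therefore the formal defining equations of $Y$ trivialize $\mathcal U = \mathcal O_{\mathscr Y}(Y) \otimes \widetilde{N_Y}^*$ on all of $\mathscr Y$, which is the statement that $\mathcal U_{|Y(\ell)} \simeq \mathcal O_{Y(\ell)}$ for every $\ell$, i.e. $\utype(Y)=\infty$.

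\medskip

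\textbf{The main obstacle} I anticipate is the careful bookkeeping needed to separate the modulus and argument of the transition cocycles $c_{ij}$ and to match the \emph{argument} part precisely with the monodromy of $\widetilde{N_Y}$ rather than with some other flat extension of $N_Y$. Concretely, the subtlety is that "purely imaginary periods" is what pins down the $S^1$-valued (unitary) representative among all flat extensions of $N_Y$, and I must verify that the period condition is genuinely equivalent to the modulus of the cocycle being a coboundary; this uses that the flat unitary connection on $N_Y$ is unique, so that the unitary representative $\widetilde{N_Y}$ is canonically determined. Making the passage between the cohomological formulation of Ueda triviality on each $Y(k)$ and the analytic statement about $\frac{df_i}{f_i}$ gluing on $\mathscr Y$ fully rigorous — in particular checking convergence is not an issue since everything is formal — is where the real work lies, though each individual step reduces to the elementary fact that $d\log$ of a constant vanishes.
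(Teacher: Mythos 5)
Your proof is correct and follows essentially the same route as the paper: triviality of $\mathcal U$ on $\mathscr Y$ yields formal defining equations with unitary constant transitions whose logarithmic derivatives glue to $\omega$, and conversely local defining equations extracted from $\omega$ have constant transitions which the purely-imaginary-period hypothesis lets you normalize to lie in $S^1$, identifying $\mathcal O(Y)_{|\mathscr Y}$ with $\widetilde{N_Y}_{|\mathscr Y}$. If anything, you are more explicit than the paper about the normalization step (killing the coboundary $\{\log|c_{ij}|\}$) and about why the resulting unitary cocycle is that of $\widetilde{N_Y}$.
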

\begin{proof}
    If $\mathcal U$ is trivial  then there exists a covering $\{ U_i\}$ of $\mathscr Y$ and (formal) functions $f_i \in \mathcal O_{\mathscr Y}(U_i)$
    such that $Y \cap U_i = \{ f_i = 0 \}$ and $f_i = \lambda_{ij} \cdot f_j$ over $U_i \cap U_j$. The sought $1$-form $\omega$ is then defined  over $U_i$
    as the logarithmic derivative of $f_i$. Clearly,  $\Res(\omega)= Y$ and it has purely imaginary periods.

    Reciprocally, suppose there exists $\omega$ with purely imaginary periods and   with $\Res(\omega)= Y$. Then, over a simply connected open
    covering $\{ U_i\}$ of $\mathscr Y$, we can set
    \[
        f_i = \exp \left( \int \omega_{|U_i} \right) \, .
    \]
    Since the periods of $\omega$ are purely imaginary, over $U_i \cap U_j$ the quotient $f_i/f_j$ is a complex number of modulus one. This
    shows that $\mathcal O_U(Y)_{|\mathscr Y}$ is isomorphic to ${\widetilde{N_Y}}_{|\mathscr Y}$. The triviality of $\mathcal U_{|\mathscr Y}$ follows.
\end{proof}

\subsection{Hypersurfaces of infinite type}

By definition, if $\utype(Y)=\infty$, then the restriction of $\mathcal U$ to the completion of $U$ along $Y$ is trivial, \emph{i.e.} $\mathcal U$ is trivial on a formal neighborhood of $Y$.
The theorem below, due to Ueda (cf. \cite[Theorem 3]{Ueda}), gives sufficient conditions to the triviality of $\mathcal U$ on an Euclidean neighborhood of $Y$ in $U$.  Although Ueda states his result only for curves on surfaces, his proof works as it is to establish the more general  result below.

\begin{thm}\label{T:Uedainfty}
Let $Y$ be a smooth compact  connected K\"ahler hypersurface of a complex manifold $U$ with topologically torsion normal bundle.
If $\utype(Y)= \infty$ and $N_Y$ is a torsion line-bundle then $Y$ is a (multiple) fiber of a fibration.
\end{thm}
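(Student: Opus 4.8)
The plan is to promote the purely formal triviality of the Ueda line bundle into a genuine analytic one on an Euclidean neighborhood, and then to extract a fibration from the defining function produced along the way. As a starting point, $\utype(Y)=\infty$ means precisely that $\mathcal U|_{\mathscr Y}$ is trivial, which by Lemma \ref{L:log unitary} is witnessed by a simply connected cover $\{U_i\}$ of $\mathscr Y$, local formal defining functions $f_i$ with $Y\cap U_i=\{f_i=0\}$, and unitary constants $\lambda_{ij}$ with $f_i=\lambda_{ij}f_j$ on overlaps; the $\lambda_{ij}$ are the flat transition cocycles of $\widetilde{N_Y}$, and $\omega = df_i/f_i$ is the associated formal closed logarithmic $1$-form with $\Res(\omega)=Y$ and purely imaginary periods.

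The first genuine use of the hypotheses is to exploit that $N_Y$ is not merely topologically torsion but actually torsion: choosing $m$ with $N_Y^{\otimes m}\simeq \mathcal O_Y$, the monodromy of $\widetilde{N_Y}$ lands in the group $\mu_m$ of $m$-th roots of unity, so $\lambda_{ij}^m=1$. Hence the formal functions $f_i^m$ agree on overlaps and patch to a single global formal function $\hat F$ on $\mathscr Y$ with $\Div(\hat F)=mY$; equivalently, $m\omega = d\hat F/\hat F$ has all periods in $2\pi i\,\Z$.

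The crux of the argument is then a convergence statement, and this is the step I expect to be the main obstacle. A priori $\hat F$ is only a formal object, and one must show that its Taylor expansion along $Y$ has a uniform positive radius of convergence, so that it defines a genuine holomorphic function on some Euclidean neighborhood $V$ of $Y$. This is exactly the content of Ueda's argument in \cite[Theorem 3]{Ueda}, which carries over verbatim from the case of curves on surfaces to arbitrary dimension. The finiteness of the monodromy is indispensable here: it forces the quantities $\lambda_{ij}^k-1$ governing the order-by-order obstructions to be either zero or bounded away from zero, thereby eliminating the small-denominator phenomenon that genuinely destroys convergence in the non-torsion, purely-imaginary-period regime (as in Arnold's example, \cite{Arnold}). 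This is the only place where one truly needs torsion rather than topological torsion of $N_Y$.

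Granting convergence, set $F:=\hat F$, a genuine holomorphic function on $V$ with $\Div(F)=mY$ and $Y=F^{-1}(0)_{\mathrm{red}}$. After shrinking $V$ so that $F$ is proper over a small disk $\Delta\subset\C$ and submersive away from its finitely many critical values, the map $F\colon V\to\Delta$ is the desired fibration, exhibiting $Y$ as its fiber over $0$ with multiplicity $m$; when $m=1$ this fiber is reduced, and in general it is a multiple fiber, which is exactly the assertion of the theorem.
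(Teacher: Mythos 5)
Your overall route is the same as the paper's: the paper gives no independent proof of Theorem \ref{T:Uedainfty}, it simply invokes Ueda's original result (\cite[Theorem 3]{Ueda}) with the remark that his proof works as it is for hypersurfaces in any dimension, and your proposal likewise delegates all of the analytic content to that same argument. The reductions you build around the citation are essentially sound: since $Y$ is compact K\"ahler, torsion of $N_Y$ does force the monodromy of $\widetilde{N_Y}$ into $\mu_m$, and (after the routine step you gloss over, namely refining the covering and rescaling the $f_i$ by unitary constants so that the cocycle $\{\lambda_{ij}\}$ actually takes values in $\mu_m$, not just lies in the right cohomology class) the functions $f_i^m$ patch; and a genuinely holomorphic function with divisor $mY$ on an Euclidean neighborhood does yield the fibration by the properness argument you sketch.

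There is, however, one genuine flaw in how you frame the crux. You first fix a formal solution (the $f_i$, hence $\hat F$) and then assert that "one must show that its Taylor expansion along $Y$ has a uniform positive radius of convergence," attributing this to Ueda. That is not what Ueda proves, and it is false as stated: the formal function with divisor $mY$ is far from unique, and if a convergent solution $F$ exists then $u \cdot F$ is again a formal solution for every formal unit $u$ on $\mathscr Y$, divergent whenever $u$ is; such divergent units exist in exactly this situation (take $u = 1 + \sum_{k \ge 1} a_k F^k$ with $\sum a_k t^k$ of zero radius of convergence). So the particular $\hat F$ you constructed may well diverge, and no argument can prove otherwise. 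What Ueda actually does is \emph{construct} a convergent solution: a successive-approximation scheme choosing defining functions order by order with estimates, where finiteness of the order of $N_Y$ bounds the solutions of the cohomological equations (your "no small denominators" remark is the correct heuristic for why torsion, rather than topological torsion, is needed). The repair is easy --- quote Ueda as producing a holomorphic function with divisor $mY$ on some Euclidean neighborhood, rather than as a convergence statement about your pre-chosen formal object --- but the distinction matters: "a formal solution need not converge, one must build a convergent one" is precisely the formal-versus-analytic subtlety this paper is about (compare Arnold's example \cite{Arnold} and \cite[Theorem 5]{LorayThomTouzet}).
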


\subsection{Hypersurfaces of finite type and closed formal differential forms}
As above, let $\mathscr Y$ be the formal completion of $U$ along $Y$.

\begin{lemma}\label{L:holomorfa}
    If $\utype(Y)<\infty$ then every formal holomorphic function on $\mathscr Y$ is constant.
\end{lemma}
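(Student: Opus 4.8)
The plan is to argue by contradiction: a nonconstant formal function on $\mathscr Y$ will produce a global formal closed logarithmic $1$-form with residue equal to $Y$ and purely imaginary periods, which by Lemma~\ref{L:log unitary} would force $\utype(Y)=\infty$, contrary to the hypothesis.

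First I would normalize the function. Let $f \in H^0(\mathscr Y, \mathcal O_{\mathscr Y})$. Since $Y$ is compact and connected, the restriction $f_{|Y}$ is a constant $c$, so replacing $f$ by $f-c$ we may assume $f$ vanishes along $Y$. If $f \neq 0$, let $m\geq 1$ be its order of vanishing, that is, the least integer with $f \notin H^0(\mathscr Y, I^{m+1})$; its leading term is then a nonzero section $\bar f \in H^0(Y, I^m/I^{m+1}) \simeq H^0(Y, N_Y^{*\otimes m})$, using that $Y$ is a smooth divisor so that $I^m/I^{m+1}\simeq N_Y^{*\otimes m}$.

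The key technical point, and the step I expect to be the main obstacle, is that $\bar f$ must be nowhere vanishing. Here I would exploit the standing hypothesis that $N_Y$ is topologically torsion and carries a flat unitary connection; the tensor power $N_Y^{*\otimes m}$ then inherits a flat unitary metric $h$ of vanishing curvature. For a holomorphic section, Poincaré--Lelong gives $dd^c \log \|\bar f\|_h = [\operatorname{div}(\bar f)]\ge 0$, so $\log\|\bar f\|_h$ is plurisubharmonic on the compact connected manifold $Y$ and hence constant by the maximum principle; therefore $\bar f$ has no zeros and trivializes $N_Y^{*\otimes m}$. (On a curve this is merely the remark that a nonzero section of a degree-zero line bundle is nowhere vanishing.) Consequently, over a simply connected covering $\{U_i\}$ of $\mathscr Y$ with a transverse coordinate $z_i$ cutting out $Y$, the nowhere-vanishing of $\bar f$ lets me write $f = z_i^m\, u_i$ with $u_i$ a formal unit, so that $\omega := \tfrac1m\, df/f = dz_i/z_i + \tfrac1m\, du_i/u_i$ is a well-defined global formal logarithmic $1$-form with $\Res(\omega)=Y$. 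Without the nowhere-vanishing of $\bar f$ the form $df/f$ would pick up extra residues along $\operatorname{div}(\bar f)$ and fail to be logarithmic with residue exactly $Y$, which is precisely why the flat-metric maximum-principle argument is needed.

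Finally I would verify the periods. Because $f$ is a single-valued formal function, $\log f$ has monodromy contained in $2\pi i\,\Z$, so the periods of $\omega = d\bigl(\tfrac1m\log f\bigr)$ lie in $\tfrac{2\pi i}{m}\Z$ and are purely imaginary. Lemma~\ref{L:log unitary} then yields $\utype(Y)=\infty$, contradicting the assumption $\utype(Y)<\infty$. Hence $f-c=0$, i.e. every formal holomorphic function on $\mathscr Y$ is constant.
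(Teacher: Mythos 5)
Your proof is correct and follows essentially the same route as the paper: normalize $f$ so it vanishes along $Y$, take (a suitable multiple of) its logarithmic differential to obtain a closed formal logarithmic $1$-form with residue divisor $Y$ and purely imaginary periods, and contradict Lemma~\ref{L:log unitary}. The only difference is that you carefully justify the step the paper merely asserts---that $g = f - f(Y)$ cuts out a (possibly non-reduced) multiple of $Y$, i.e.\ that the leading term in $H^0(Y, N_Y^{*\otimes m})$ is nowhere vanishing, which you deduce from the flat unitary structure on $N_Y$ via the maximum principle---so your write-up is, if anything, more complete than the paper's.
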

\begin{proof}
    Aiming at a contradiction, assume the existence of a non-constant formal function $f \in H^0(\mathscr Y, \mathcal O_{\mathscr Y})$. The function $f$ is necessarily constant along the hypersurface $Y$. The formal function $g = f - f(Y)$ provides a (non necessarily reduced) equation for $Y$. The logarithmic differential of  $g$ is a closed logarithmic $1$-form with residue divisor $Y$ and purely imaginary periods. Lemma \ref{L:log unitary} gives the sought contradiction.
\end{proof}

\begin{lemma}\label{L:restriction 1}
    If $\utype(Y) < \infty$ then the restriction morphism
    \[
        \ker \left\{ d: H^0(\mathscr Y, \Omega^1_{\mathscr Y}) \to H^0(\mathscr Y, \Omega^2_{\mathscr Y}) \right\} \longrightarrow H^0(Y,\Omega^1_Y)
    \]
    from closed formal holomorphic $1$-forms on $\mathscr Y$ to $H^0(Y,\Omega^1_Y)$ is injective.
\end{lemma}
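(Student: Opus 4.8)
The plan is to show that the kernel of the restriction morphism is trivial. So let $\omega$ be a closed formal holomorphic $1$-form on $\mathscr Y$ with $\omega_{|Y} = 0 \in H^0(Y,\Omega^1_Y)$, and aim to prove $\omega = 0$. The strategy is to deduce that $\omega$ is the differential of a global formal function, and then invoke Lemma \ref{L:holomorfa}, which is available precisely because $\utype(Y) < \infty$.

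The first step is to show that all periods of $\omega$ vanish. Since $Y$ is a smooth compact hypersurface it is a deformation retract of a neighborhood, so the underlying topological space of $\mathscr Y$ has the homotopy type of $Y$; in particular every class in $H_1(\mathscr Y,\Z)$ is represented by a real $1$-cycle $\gamma$ lying inside $Y$. Because $\omega$ is closed, its period over such a class depends only on the homology class, and is computed by integrating the pullback of $\omega$ to $Y$, namely $\omega_{|Y}$. As $\omega_{|Y} = 0$ by hypothesis, every period $\int_\gamma \omega$ vanishes.

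The second step is to integrate. By the formal Poincaré lemma, over a simply connected open covering $\{U_i\}$ of $\mathscr Y$ we may write $\omega_{|U_i} = d f_i$ for formal functions $f_i$; on overlaps the differences $c_{ij} = f_i - f_j$ are constants and define a class in $H^1(\mathscr Y,\underline{\C}) \simeq H^1(Y,\underline{\C})$. The pairing of this class with $H_1$ is given exactly by the periods of $\omega$, which we just showed to be zero; hence the class vanishes, the $c_{ij}$ are a coboundary, and the local primitives can be adjusted by constants to glue into a global formal function $f \in H^0(\mathscr Y,\mathcal O_{\mathscr Y})$ with $df = \omega$. Lemma \ref{L:holomorfa} then forces $f$ to be constant, so $\omega = df = 0$, proving injectivity.

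The point requiring the most care is the passage from vanishing periods to global exactness in the formal category: one must justify that the period pairing on $H^1(\mathscr Y,\underline{\C})$ is nondegenerate over $\C$ (this is universal coefficients together with the identification of the cohomology of the constant sheaf on $\mathscr Y$ with that of $Y$ via the retraction) and that integration of the closed formal form over cycles in $Y$ reduces to integration of $\omega_{|Y}$. Once this is in place the argument is short; everything else is the formal Poincaré lemma and a direct appeal to Lemma \ref{L:holomorfa}.
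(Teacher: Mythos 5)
Your proof is correct, and its skeleton --- formal Poincaré lemma, glue the local primitives into a global formal function $f$ with $df=\omega$, then kill $f$ (hence $\omega$) with Lemma \ref{L:holomorfa} --- is exactly that of the paper. The difference lies in how the gluing obstruction is handled. You identify it with a class in $H^1(\mathscr Y,\underline{\mathbb C})\simeq H^1(Y,\mathbb C)$, argue that its periods are those of $\omega_{|Y}=0$, and invoke universal coefficients to conclude that it vanishes. The paper short-circuits all of this: since $\omega_{|Y}=0$, each local primitive restricts to a locally constant function on $Y$, so one may normalize the primitives to vanish along $Y$; on an overlap the difference of two normalized primitives is then a constant vanishing on $Y$, hence zero (recall that the underlying topological space of $\mathscr Y$ is $Y$ itself), and the primitives patch with no cohomological input whatsoever. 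This also disposes of the point you rightly flag as delicate, namely what ``integration of a formal form over a cycle'' should mean: the paper never needs such a notion, whereas in your route the honest definition of the period is precisely the pairing of the Čech cocycle of constants $\{f_i-f_j\}$ against $H_1(Y,\Z)$, after which the comparison with the periods of $\omega_{|Y}$ is immediate because restriction to $Y$ commutes with taking primitives. (Your appeal to a deformation retract is likewise unnecessary: $\mathscr Y$ and $Y$ share the same underlying space, so cycles in $\mathscr Y$ already lie in $Y$.) Both arguments are valid; yours imports some topology of $Y$, while the paper's normalization trick is shorter and purely sheaf-theoretic.
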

\begin{proof}
    A closed formal $1$-form is locally the differential of a formal holomorphic function. If $\omega \neq 0$ is a closed formal $1$-form which vanishes when restricted to $Y$, i.e. is in the kernel of the morphism above, then its local primitives are locally constant along $Y$. Thus, if we choose such primitives vanishing along $Y$ then they patch together to give a unique formal holomorphic function $f$ vanishing along $Y$ and such that $\omega = df$. Lemma \ref{L:holomorfa} implies the result.
\end{proof}

\subsection{Meromorphic functions on neighborhoods of curves of finite Ueda type}
A surprising phenomenon, discovered by Ueda, is that the complex geometry of neighborhoods of curves of finite
Ueda type shares features with the complex geometry of neighborhoods of ample curves, see \cite[Corollary of Theorem 1]{Ueda}.

\begin{thm}\label{T:Ueda}
    Let $C$ be a  smooth curve on a smooth  surface $U$. If  $C^2 = 0$ and $\utype(C) < \infty$, then $C$
    has a fundamental system of strictly pseudoconcave neighborhoods.
\end{thm}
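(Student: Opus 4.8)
The plan is to reduce the statement to the construction of a single exhaustion function, and then to build that function from the finite-type normal form for the defining equations of $C$.

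\emph{Reduction.} It suffices to produce a smooth strictly plurisubharmonic function $\Phi$ on a punctured neighborhood $V\setminus C$ of $C$ with $\Phi(x)\to+\infty$ as $x\to C$. Indeed, if such a $\Phi$ exists then for $c\gg0$ the sets $V_c:=\{\Phi>c\}\cup C$ form a fundamental system of neighborhoods of $C$ (they shrink to $C$ because $\Phi$ blows up uniformly along the compact curve $C$), and each boundary $\{\Phi=c\}$ is strictly pseudoconcave: the side not containing $C$, namely $\{\Phi<c\}$, carries the strictly plurisubharmonic defining function $\Phi-c$ and is therefore strictly pseudoconvex. So the entire problem is the existence of $\Phi$. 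Note already here that the obvious first guess, minus the logarithm of a tubular ``radius'' function, has \emph{pluriharmonic} leading term, so every bit of the needed strict positivity of $i\partial\bar\partial\Phi$ will have to be extracted from lower-order corrections.

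\emph{Normal form.} Since $C^2=0$ the normal bundle $N_C$ carries a flat unitary structure, so I may choose local equations $w_i=0$ for $C$ on a cover $\{U_i\}$ whose normal-bundle cocycle $t_{ij}=(w_i/w_j)|_C$ consists of unimodular constants. Writing $k=\utype(C)<\infty$, the very definition of the Ueda type lets me normalize the $w_i$ so that $w_i=t_{ij}w_j+O(w_j^{k+1})$, while the order-$(k+1)$ discrepancy determines a cocycle representing the Ueda class $u\in H^1(C,N_C^{-k})$, which is nonzero precisely because the type is finite. I then look for $\Phi$ of the shape
\[
    \Phi=-2k\log|w_i|+A|w_i|^{2k}+(\text{further corrections of order }|w_i|^{k},|w_i|^{2k},\dots),
\]
a globally defined real function assembled from the local data, the term $A|w_i|^{2k}$ (with $A\gg0$) being inserted to secure positivity of $i\partial\bar\partial\Phi$ in the transverse direction.

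\emph{Hessian computation, the heart of the matter.} The leading term $-2k\log|w_i|$ is pluriharmonic off $C$, so $i\partial\bar\partial\Phi$ is entirely governed by the corrections. A direct expansion shows that the naive corrections contribute to the complex Hessian terms of order $|w|^{k-1}$ transversally and $|w|^{k}$ in the direction tangent to $C$, and that these leading contributions are \emph{a priori} indefinite. The strategy is to solve, order by order, for correction terms cancelling these indefinite contributions; at each stage the obstruction to solving is a \v{C}ech class, and the first genuinely uncancellable obstruction is exactly the Ueda class $u$. What survives after this normalization is a tangential term in $i\partial\bar\partial\Phi$ whose size is controlled by $u$, and since $u\neq0$ this is what forces strict plurisubharmonicity along the complex-tangent (base) direction of the tubes. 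This tangential positivity is the crucial one, because it is precisely the direction in which the complex tangent space of $\{\Phi=c\}$ lies, and it is the direction where a maximum-principle obstruction lives: as $C^2=0$, the Chern form of $N_C$ integrates to zero over $C$, so no metric on $N_C$ produces tangential positivity at leading order, and the non-triviality of $u$ is what supplies the missing curvature.

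\emph{Main obstacle.} The delicate point is this last estimate. One must push the normal form far enough to isolate the contribution of $u$, and then verify that the surviving tangential term of $i\partial\bar\partial\Phi$ is positive \emph{pointwise} on each small tube and remains so uniformly as the tubes shrink to $C$, so that $\Phi$ is genuinely strictly plurisubharmonic on all of $V\setminus C$; the essential input here is that $\utype(C)$ is finite, i.e. $u\neq0$. Carrying out this order-by-order cancellation together with the pointwise positivity estimate is the technical core of the argument. Once it is in place, the fundamental system of strictly pseudoconcave neighborhoods follows immediately from the reduction above.
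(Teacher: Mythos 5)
Your opening reduction (a strictly plurisubharmonic function $\Phi$ on $V\setminus C$ blowing up along $C$ gives the pseudoconcave tubes $\{\Phi>c\}\cup C$) is correct and standard, and your overall plan --- normalize the defining equations up to the Ueda type and extract the tangential positivity of $i\partial\bar\partial\Phi$ from the non-vanishing of the Ueda class --- is in substance the strategy of Ueda's original argument. Be aware that the paper itself gives no proof of this statement: it is quoted directly from Ueda (the Corollary of Theorem 1 of his paper), so what you are attempting is a reconstruction of Ueda's several-page analytic proof, not an alternative to an argument in the paper.

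As a proof, however, the proposal has a genuine gap, and you name it yourself: the ``order-by-order cancellation together with the pointwise positivity estimate'' that you defer to the end \emph{is} the theorem; everything you actually carry out (the reduction, the unimodular cocycle, the shape of the ansatz) is routine. Two concrete points show why this cannot be waved through. First, non-vanishing of the Ueda class $u\in H^1(C,N_C^{-k})$ is a cohomological statement; to convert it into pointwise positivity one represents $u$ by a harmonic $(0,1)$-form with coefficients in the flat bundle $N_C^{-k}$, and this representative may still vanish at points of $C$. At such points the tangential term you are counting on degenerates, while the indefinite transverse--tangential cross terms in the Hessian do not, so strict positivity of the full Hessian on shrinking tubes does not follow from $u\neq 0$ by any soft argument --- controlling exactly this is the delicate estimate in Ueda's proof. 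Second, your ansatz has logarithmic leading term, whereas Ueda's exhaustion grows like $|w_i|^{-k}$; this is not cosmetic, since it is the term $w_i^{-k}$ that, under the coordinate change $w_i=t_{ij}w_j+f_{ij}w_j^{k+1}+\cdots$, picks up the cocycle $\{f_{ij}\}$ at bounded order and thereby lets the harmonic representative of $u$ enter the Hessian at a size that can dominate the error terms. With the $-2k\log|w_i|$ leading term you give no mechanism by which the class contributes at all, so it is not clear your ansatz can be completed even in principle. In short: correct reduction, correct guiding idea, but the analytic core that makes the theorem true is absent.
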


Although the field of formal meromorphic functions on the completion of $U$ along $C$ is of infinite transcendence
degree over the complex numbers (\cite[Section 5]{Hironaka-Matsumura}), Theorem \ref{T:Ueda} combined with a result by Andreotti
\cite[Theorem 4]{Andreotti} guarantee the oppositive behavior for the field of germs of meromorphic functions
on neighborhoods of $C$.

\begin{cor}\label{C:transcendence}
    Let $C$ and $U$ be as in Theorem \ref{T:Ueda}.
    The transcendence degree of the field of germs of
    (convergent) meromorphic functions on neighborhoods of $C$ is at most two.
\end{cor}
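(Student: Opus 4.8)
The plan is to deduce the statement from Theorem~\ref{T:Ueda} together with a theorem of Andreotti on algebraic dependence of meromorphic functions on pseudoconcave manifolds. Write $\mathcal K$ for the field of germs along $C$ of (convergent) meromorphic functions, i.e. the direct limit $\varinjlim_{V} \mathcal M(V)$ over the open neighborhoods $V$ of $C$ ordered by inclusion, where $\mathcal M(V)$ denotes the field of meromorphic functions on $V$. To prove that the transcendence degree of $\mathcal K$ over $\C$ is at most two, it suffices to show that any three elements of $\mathcal K$ are algebraically dependent over $\C$.

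First I would reduce three arbitrary germs to meromorphic functions living on a single well-chosen neighborhood. Given germs $g_1,g_2,g_3 \in \mathcal K$, each is represented by a meromorphic function on some neighborhood of $C$; intersecting the three domains yields a common open neighborhood $W \supseteq C$ on which all three germs are represented. By Theorem~\ref{T:Ueda}, the hypotheses $C^2=0$ and $\utype(C)<\infty$ guarantee that $C$ has a fundamental system of strictly pseudoconcave neighborhoods; in particular this system is cofinal, so I may pick a connected strictly pseudoconcave neighborhood $V \subseteq W$ and restrict, obtaining genuine meromorphic functions $f_1,f_2,f_3 \in \mathcal M(V)$ representing $g_1,g_2,g_3$.

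Next I would invoke \cite[Theorem 4]{Andreotti}: on a connected strictly pseudoconcave complex manifold of dimension $n$ the field of meromorphic functions has transcendence degree at most $n$ over $\C$. Applying this with $n=\dim V = 2$, the three functions $f_1,f_2,f_3$ are algebraically dependent over $\C$; since restriction preserves polynomial relations, the same nontrivial relation holds among the germs $g_1,g_2,g_3$. As these were arbitrary, every triple of germs is algebraically dependent, whence $\operatorname{trdeg}_{\C}\mathcal K \le 2$.

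The main point to be careful about is matching hypotheses across the two cited results: one must check that the notion of strict pseudoconcavity produced by Theorem~\ref{T:Ueda} is exactly the one under which Andreotti's dependence theorem applies, and that shrinking to such a $V$ inside $W$ is legitimate—this uses precisely the cofinality of the pseudoconcave fundamental system. A minor verification is that $V$ can be taken connected, which holds for sufficiently small neighborhoods of the connected curve $C$, and that the reduction to finitely many germs on a common domain is valid, which is immediate from the definition of $\mathcal K$ as a direct limit.
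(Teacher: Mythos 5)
Your proof is correct and follows essentially the same route as the paper: the paper obtains the corollary by combining Theorem~\ref{T:Ueda} (fundamental system of strictly pseudoconcave neighborhoods) with Andreotti's bound \cite[Theorem 4]{Andreotti} on the transcendence degree of the meromorphic function field of a pseudoconcave manifold. Your write-up merely makes explicit the direct-limit and cofinality bookkeeping that the paper leaves implicit.
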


Theorem \ref{T:Ueda} has strong consequences when $C$ is a curve in a smooth projective surface. We collect
some of them in the statement below.

\begin{cor}\label{C:UedaProjective}
    Let $C$ be a  smooth curve on a smooth  projective surface $S$. If  $C^2 = 0$ and $\utype(C) < \infty$, then
    the following assertions hold true:
    \begin{enumerate}
        \item\label{I:Narasimhan} $S - C$ is holomorphically convex and after the contraction of finitely many curves it becomes a Stein space;
        \item\label{I:H1} The morphism $\iota_*:H_1(C,\mathbb Z) \to H_1(S,\mathbb Z)$ induced by the inclusion of $\iota : C \to S$ is surjective;
        \item\label{I:sections} Every meromorphic function defined on an Euclidean neighborhood of $C$ extends to a global rational function. More generally,
        if $\mathcal E$ is a locally free sheaf of $\mathcal O_S$-modules then every meromorphic section of $\mathcal E$ defined on
        an Euclidean neighborhood of $C$ extends to a global rational section.
        \item\label{I:forms} If $\omega$ is a closed holomorphic $1$-form defined on a neighborhood $C$ then $\omega$ extends to
        a global holomorphic $1$-form defined on $S$.
    \end{enumerate}
\end{cor}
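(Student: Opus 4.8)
The plan is to derive all four assertions from Theorem~\ref{T:Ueda}, which furnishes a fundamental system of strictly pseudoconcave neighborhoods of $C$, together with the projectivity of $S$. For item~(\ref{I:Narasimhan}) I would first convert the pseudoconcavity of $C$ into convexity of the complement: if $V\supset C$ is strictly pseudoconcave then $S\setminus\overline V$ has strictly Levi-convex boundary, and letting $V$ range over the fundamental system exhibits $S-C$ as an increasing union of relatively compact strongly pseudoconvex domains. Hence $S-C$ admits a smooth exhaustion function that is strictly plurisubharmonic outside a compact set, so by Narasimhan's solution of the Levi problem $S-C$ is holomorphically convex. Its Remmert reduction $r\colon S-C\to W$ maps onto a Stein space and contracts the maximal compact analytic subset, which on a surface is a finite union of compact curves; this gives (\ref{I:Narasimhan}). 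It also records, for later use, that \emph{every} compact curve of $S-C$ is contracted by $r$, and that the components of the exceptional set $E$ have negative-definite intersection matrix by Grauert's contractibility criterion.

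The argument for item~(\ref{I:H1}) is topological. Let $T$ be a tubular neighborhood of $C$; since $C^2=0$ the boundary circle bundle $\partial T$ is topologically trivial, so $p_*\colon H_1(\partial T,\Z)\to H_1(C,\Z)$ is onto. The long exact sequence of the pair $(S,S-C)$ with the Thom isomorphism $H_k(S,S-C)\simeq H_{k-2}(C)$ yields $H_1(S,S-C)=0$, whence $j_*\colon H_1(S-C,\Z)\to H_1(S,\Z)$ is onto. Next I would use (\ref{I:Narasimhan}): shrinking $V$, the surface $M=\overline{S\setminus V}$ is compact with strictly pseudoconvex boundary $\partial M\simeq\partial T$, and $S-C$ deformation retracts onto $M$. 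A strictly plurisubharmonic Morse function on $M$ has critical points of index at most $2$, so $M$ is obtained from a collar of $\partial M$ by attaching handles of index at least $2$; consequently $\pi_1(\partial M)\to\pi_1(M)$, and a fortiori $H_1(\partial T,\Z)\to H_1(S-C,\Z)$, is surjective. Since the composite $H_1(\partial T)\to H_1(S)$ factors both as $j_*$ followed by the inclusion and as $\iota_*\circ p_*$, surjectivity of $j_*$, of the inclusion, and of $p_*$ forces $\iota_*$ to be onto.

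Item~(\ref{I:sections}) is where the real work lies, and I expect it to be the main obstacle. By Corollary~\ref{C:transcendence} the field $\mathcal K$ of germs of meromorphic functions along $C$ has transcendence degree at most $2$ over $\C$; as restriction embeds $\C(S)$ into $\mathcal K$ and $\C(S)$ already has transcendence degree $2$, the extension $\mathcal K/\C(S)$ is algebraic. Given $f\in\mathcal K$, its minimal polynomial over $\C(S)$ presents $f$ as a local holomorphic section, defined near $C$, of a connected finite branched covering of $S$; I would then argue, using the connectivity furnished by (\ref{I:H1}) (surjectivity on $\pi_1$, obtained exactly as above), that a section living near the pseudoconcave curve $C$ forces trivial monodromy, so the covering has degree one and $f$ is rational. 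This is precisely Andreotti's algebraicity mechanism for pseudoconcave manifolds, and making the monodromy step rigorous is the delicate point. The statement for a locally free sheaf $\mathcal E$ then follows by applying the scalar case in local trivialisations and clearing denominators.

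Finally, for item~(\ref{I:forms}) I would bootstrap from (\ref{I:sections}). A closed holomorphic $1$-form $\omega$ near $C$ is in particular a meromorphic section of $\Omega^1_S$, hence extends to a global rational $1$-form $\tilde\omega$; since $d\tilde\omega$ vanishes near $C$ it vanishes identically, so $\tilde\omega$ is closed, holomorphic near $C$, with polar divisor $P$ disjoint from $C$. As $P$ is a compact curve of $S-C$, it lies in the negative-definite exceptional set $E$. The residue $r_i$ of $\tilde\omega$ along each component $E_i$ of $P$ satisfies the global relation $\sum_i r_i[E_i]=0$ in $H^2(S,\C)$, and negative-definiteness makes the classes $[E_i]$ linearly independent, so every $r_i$ vanishes. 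Writing $\tilde\omega=\eta+dg$ with $\eta\in H^0(S,\Omega^1_S)$ representing the Hodge class of $\tilde\omega$ and $g\in\C(S)$, the function $g$ is holomorphic near $C$ with polar divisor $(g)_\infty$ supported on $E$; comparing the linearly equivalent divisors $(g)_0$ and $(g)_\infty$, which share no component, gives $(g)_\infty^2=(g)_0\cdot(g)_\infty\ge 0$, contradicting negative-definiteness unless $(g)_\infty=0$. Hence $g$ is constant and $\tilde\omega=\eta$ is holomorphic on all of $S$.
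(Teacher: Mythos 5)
The paper's own proof of this corollary is a chain of citations: item~(\ref{I:Narasimhan}) follows from Theorem~\ref{T:Ueda} together with \cite[Theorem 1]{Narasimhan}, and items~(\ref{I:H1}), (\ref{I:sections}), (\ref{I:forms}) are respectively Lemma~4, Lemma~5 and Theorem~3 of \cite{UedaSingular}. Your item~(\ref{I:Narasimhan}) is essentially that argument. For the other three items you attempt self-contained proofs, and each one has a genuine gap.

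For item~(\ref{I:H1}), your Morse-theoretic step presupposes a strictly plurisubharmonic Morse function on all of $M=\overline{S\setminus V}$. No such function exists as soon as $M$ contains a compact curve (the maximum principle fails on it), and this is precisely the case the corollary must allow: your own items~(\ref{I:Narasimhan}) and (\ref{I:forms}) work with a nonempty exceptional set $E$. The exhaustion furnished by Narasimhan--Grauert theory is strictly plurisubharmonic only \emph{outside} a compact set, so the Andreotti--Frankel handle decomposition controls $M$ only outside a neighborhood of $E$; loops in $E$ (for instance in an elliptic exceptional curve) are not seen by it. Passing to the Remmert reduction $W$ and using Hamm's relative handle decomposition for Stein spaces gives surjectivity of $H_1(\partial W,\Z)\to H_1(W,\Z)$, but this does not lift back to $M$, because collapsing $E$ to points kills exactly the classes coming from $H_1(E,\Z)$; so surjectivity of $H_1(\partial T,\Z)\to H_1(S-C,\Z)$ does not follow. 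A proof that actually works in the presence of exceptional curves is of a different nature, e.g.\ the finite-cover-plus-Hodge-index argument of Proposition~\ref{P:pi1} and Corollary~\ref{C:pi1}, which the paper presents as a generalization of this very item.

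For item~(\ref{I:sections}) the reduction to algebraicity over $\C(S)$ via Corollary~\ref{C:transcendence} is correct, but the step you yourself flag is a real gap, and the fix you suggest does not work: the monodromy of the branched cover defined by the minimal polynomial of $f$ is a representation of $\pi_1(S\setminus B)$, where $B$ is the branch divisor, and surjectivity of $\pi_1(C)\to\pi_1(S)$ gives no control over $\pi_1(U\setminus B)\to\pi_1(S\setminus B)$ (loops around components of $B$ are invisible to it). The standard completion is different: let $S'\to S$ be the normalization of $S$ in $\C(S)(f)$; the local section embeds a neighborhood of $C$ into $S'$, giving a curve $C'\cong C$ with $C'^2=0$, and the projection formula shows $\pi^*C=C'+R$ with $R$ effective, disjoint from $C'$ and $R^2=0$; if $\deg\pi>1$ then $R\neq 0$ is a compact divisor of zero self-intersection inside $S'-C'$, contradicting item~(\ref{I:Narasimhan}) applied to $(S',C')$, since compact curves in a holomorphically convex complement lie in a negative-definite exceptional set. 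For item~(\ref{I:forms}), your residue argument is correct, but the decomposition $\tilde\omega=\eta+dg$ with $\eta\in H^0(S,\Omega^1_S)$ and $g\in\C(S)$ is not a valid general fact about closed rational $1$-forms without residues: already on an elliptic curve, $\wp(z)\,dz$ is of the second kind yet is not (first kind)$\,+\,$(exact rational) --- its class has a nonzero antiholomorphic component --- and pulling back such a form to a product surface gives surface counterexamples. What rescues the conclusion is, once more, negative definiteness: since $\tilde\omega$ has no residues it is locally $dH_i$ with $H_i$ meromorphic, the principal parts of the $H_i$ glue to a section of $\mathcal O_S(D)/\mathcal O_S$ with $D$ effective and supported on $E$; negative definiteness yields a component $E_j$ of $D$ with $D\cdot E_j<0$, so the leading part along $E_j$, a section of the negative line bundle $\mathcal O_S(D)|_{E_j}$, vanishes, and induction on $D$ shows $\tilde\omega$ is already holomorphic. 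So (\ref{I:forms}) is repairable, but not by the Hodge-theoretic claim as you state it.
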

\begin{proof}
    Theorem \ref{T:Ueda} implies that $S-C$ satisfies the assumptions of  \cite[Theorem 1]{Narasimhan}, Item \ref{I:Narasimhan} follows.
    Item \ref{I:H1} is the content of \cite[Lemma 4]{UedaSingular} and Item \ref{I:sections} is the content of \cite[Lemma 5]{UedaSingular}.
    Finally, Item \ref{I:forms} is the content of  \cite[Theorem 3]{UedaSingular}.
\end{proof}

\section{Existence of closed rational $1$-forms}\label{S:closed}

This section is devoted to the proof of Theorem \ref{THM:closed} from the introduction.

\subsection{Algebraic fundamental group} We start things off with a generalization of Item (\ref{I:H1}) of Corollary \ref{C:UedaProjective}.

\begin{prop}\label{P:pi1}
    Let $D$ be an effective compact and connected divisor on a compact Kähler manifold $X$ with $D^2=0$ in $H^4(X, \mathbb Q)$. Assume that the natural morphism
    \[
        \iota_* : \pi_1^{\mathrm{alg}}(|D|) \longrightarrow \pi_1^{\mathrm{alg}}(X)
    \]
    induced by the inclusion of $\iota:|D| \to X$ of the support of $D$ into $X$  is not surjective.
    Then, perhaps after replacing $X$ by a degree two étale covering, there exists a global closed logarithmic $1$-form $\omega$ with purely imaginary periods such that
    \[
        \Res \omega_{|U} = D
    \]
    where $U$ is a  sufficiently small neighborhood of $|D|$.
\end{prop}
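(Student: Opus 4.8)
The plan is to convert the failure of surjectivity into a covering space on which the support of $D$ breaks apart, and then to feed the numerical hypothesis $D^2=0$ into the Hodge index theorem in order to manufacture the form. Throughout write $n=\dim X$.

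First I would translate the hypothesis into geometry. Since $\pi_1^{\mathrm{alg}}(X)$ is profinite and the image of $\iota_*$ is a proper closed subgroup, that image lies in a proper open (hence finite-index) subgroup; passing to its normal core produces a connected finite étale \emph{Galois} covering $p\colon X'\to X$, with group $G$, for which $p^{-1}(|D|)$ is disconnected. Write $p^{-1}(|D|)=F_0\sqcup\cdots\sqcup F_{r-1}$ with $r\ge 2$, and $p^*D=\sum_i D_i$, where $D_i$ is the part of $p^*D$ supported on $F_i$; since $p$ is étale the multiplicities of $D$ are preserved along each $D_i$, and $G$ permutes the components transitively.

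The heart of the argument is an isotropy computation. Fix a Kähler class $\kappa$ on $X'$ and consider the Hodge--Riemann form on $H^{1,1}(X',\R)$,
\[
    Q(\alpha,\beta)=\int_{X'}\alpha\wedge\beta\wedge\kappa^{\,n-2},
\]
which by the Hodge index theorem has signature $(1,h^{1,1}-1)$, hence Witt index $1$. The classes $[D_i]$ are of type $(1,1)$; as the $F_i$ are pairwise disjoint, $[D_i]\cup[D_j]=0$ in $H^4(X',\Q)$ for $i\ne j$, and since $[D]^2=0$ pulls back to $(p^*[D])^2=\sum_i[D_i]^2=0$, the transitivity of $G$ (which acts by isometries of $Q$ permuting the $[D_i]$) forces all the self-intersections $[D_i]^2$ to be equal, hence each to have vanishing $Q$-norm. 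Thus the $[D_i]$ span a totally isotropic subspace, which therefore is a single line $\R v$; being effective and $G$-conjugate, the $[D_i]$ are in fact all equal to a common class $c\,v$. Consequently either $c=0$, so that $\mathcal O_{X'}(D_0)\in\Pic^{\tor}(X')$, or, taking $R:=(r-1)D_0-\sum_{i\ge1}D_i$, we get $[R]=0$ in $H^2(X',\R)$. In either case the relevant divisor has torsion Chern class, so it defines a flat unitary line bundle on the Kähler manifold $X'$, and the logarithmic derivative of its tautological meromorphic section (with respect to the flat unitary connection) is a global closed logarithmic $1$-form $\omega'$ on $X'$, with purely imaginary periods, whose residue near $F_0$ is proportional to $D_0$.

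Finally I would descend to $X$. The local biholomorphism afforded by $p$ over a neighbourhood of $F_0$ transports $\omega'$ to a closed logarithmic form near $|D|$ with $\Res=D$ and purely imaginary periods, which already gives $\utype(|D|)=\infty$ by Lemma~\ref{L:log unitary}. Promoting this to a genuinely \emph{global} form on $X$ is the delicate point, and I expect it to be the main obstacle: the residue datum is intrinsically non-$G$-invariant, so a naive trace over the Galois group annihilates it. The way around this, suggested by the basic example of a fibre of $E\times B\to B$, is to realise the isotropic class $v$ as (a multiple of) the class of a fibration $X\dashrightarrow B$ and to build $\omega$ as the pullback of a differential of the third kind on $B$ with prescribed purely imaginary periods, allowing extra polar components away from $|D|$; the residue then matches $D$ on a small neighbourhood $U$ of $|D|$. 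It is in identifying and orienting this fibration structure, which is only defined after trivialising a residual $2$-torsion ambiguity in the flat unitary extension of the normal bundle of $|D|$, that one is forced to replace $X$ by a degree two étale covering.
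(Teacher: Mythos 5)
You start exactly as the paper does: pass to a finite Galois étale cover $p\colon X'\to X$ with group $G$ on which $p^{-1}(|D|)$ breaks into $r\ge 2$ components, use disjointness, $(p^*[D])^2=0$ and the Hodge index theorem to push the classes $[D_i]$ onto a single isotropic line, and then take the flat unitary connection on the numerically trivial bundle $\mathcal O_{X'}(R)$ to produce a closed logarithmic $1$-form with purely imaginary periods on the cover. This is the paper's own computation (stated there for $r=2$: $D_1^2=D_1\cdot D_2=D_2^2=0$ forces $m_1D_1$ numerically equivalent to $m_2D_2$, and the flat connection on $\mathcal O(m_1D_1-m_2D_2)$ yields the form). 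Two small repairs are needed in this part: $G$ acts by isometries of $Q$ only after you average the Kähler class to make it $G$-invariant (this is also what makes the $[D_i]$ literally equal; without it you only get positive rational proportionality, which is still enough), and your case $c=0$ is vacuous, since a nonzero effective divisor on a compact Kähler manifold pairs positively with $\kappa^{n-1}$.

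The genuine gap is the last step, which you flag yourself, and your proposed repair fails on both counts. First, $p$ is \emph{not} a biholomorphism near $F_0$: the covering $F_0\to|D|$ has degree $|G|/r$, the order of the image of $\pi_1^{\mathrm{alg}}(|D|)$ in $G$, which is typically $>1$; at best you can trace $\omega'$ over the component of $p^{-1}(U)$ containing $F_0$ and rescale, and even then you only obtain a form on a neighborhood of $|D|$, not the required global one. Second, the fibration strategy cannot work when $r=2$: take a flat $\mathbb P^1$-bundle over a curve whose monodromy lies in the normalizer of the diagonal torus but not in the torus, with non-torsion multipliers; downstairs the two sections glue to a connected bisection $D$ with $D^2=0$ and non-surjective $\iota_*$, while on the double cover the disjoint sections $D_0,D_1$ are numerically equivalent, swapped by the involution, and are fibers of no fibration at all (one checks $h^0(\mathcal O_{X'}(D_0))=1$). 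This is precisely what the caveat ``perhaps after replacing $X$ by a degree two étale covering'' is for: when $r=2$ the image subgroup has index two, hence is normal, so one may rerun the whole construction on the associated \emph{degree-two} Galois cover, and the form built there --- with poles along both $D_0$ and $D_1$ --- already \emph{is} the asserted global form; no descent to $X$ is attempted, and the degree-two cover has nothing to do with a $2$-torsion ambiguity of the normal bundle. When $r\ge 3$ your instinct is correct but it is a theorem, not an observation: the paper invokes Totaro's theorem (three or more pairwise disjoint connected divisors on a compact Kähler manifold lie in fibers of a fibration over a curve), descends the fibration to $X$ via uniqueness of the induced foliation and $G$-equivariance, and pulls back a suitable logarithmic form from the base. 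Without the index-two reduction for $r=2$ and Totaro's theorem for $r\ge 3$, your argument does not close.
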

\begin{proof}
    If $\iota_*$ is not surjective then, by definition, there exists a finite group $\Gamma$ and a surjective morphism
    $\rho : \pi_1(X) \to \Gamma$ such that $k = [\Gamma:\rho(\iota_* \pi_1(|D|))] > 1$.

    Let $r: Y\to X$ be the  Galois covering determined by the kernel of $\rho$.  It is a finite étale covering of degree equal to the cardinality of $\Gamma$. Since we are assuming that $X$ is compact Kähler, the same holds true for $Y$.

    Consider the divisor $r^*D$. It is a compact divisor on $Y$. The action of $\Gamma$ on $Y$ preserves  $r^*D$ and therefore acts on the set of its connected components.  Since the subgroup $\rho(\iota_* \pi_1(|D|))\subset \Gamma$ can be identified  with the subgroup which acts trivially on the
    set of connected components of  $r^*D$, it follows that  $r^*D$ has exactly $k>1$ connected     components.

    Assume first $k=2$. In this case,   $r^*D= D_1 + D_2$ where $D_1$ and $D_2$ are connected disjoint divisors with $D_1^2 = D_2^2 = 0$.  Let $\Theta$ be a Kähler form on $Y$, and consider the bilinear form on $H^{1,1}(Y)$ defined by $\alpha \cdot \beta = \int_Y \alpha \wedge \beta \wedge \Theta^{\dim X - 2}$ for any $\alpha, \beta \in H^{1,1}(Y)$. Hodge index theorem (see for instance \cite[Section 6.3.2]{Voisin})  implies this bilinear form has signature $(1,h^{1,1}(Y)-1)$. Since $D_1^2=D_1\cdot D_2=D_2^2=0$, it follows that a multiple of $m_1D_1$ of $D_1$  is numerically equivalent to a multiple $m_2 D_2$ of $D_2$. Therefore the line bundle $\mathcal O_{Y}(m_1 D_1 - m_2 D_2)$ has trivial Chern class, and  as such supports a flat unitary connection. As explained in \cite[Proposition 3.2]{PereiraJAG}, this connection uniquely determines a closed logarithmic $1$-form $\omega$ with purely imaginary periods and $\Res(\omega) = m_1 D_1 - m_2 D_2$. The sought $1$-form is  $(1/m_1)\omega$.

    If $k \ge 3$  the existence of a fibration $f: \overline Y \to C$ mapping the connected components of $|r^*D|$ to distinct points follows from \cite[Theorem 2.1]{Totaro} (although stated only for projective manifolds,  the result is also true for compact Kähler manifolds \cite[Theorem 2]{PereiraJAG}).
    The foliation defined by this fibration is unique, and as such, must be preserved by the action of $\Gamma$ on $Y$. It follows the existence of a fibration on $X$ with $|D|$ equal to the support of one of its fibers. The existence of $\omega$ follows easily by pulling back a suitable logarithmic $1$-form on the basis of the fibration.
\end{proof}

\begin{cor}\label{C:pi1}
    Let $X$ be a compact Kähler manifold and let $Y$ be a smooth compact hypersurface of $X$ with numerically trivial normal bundle. If $\utype(Y) < \infty$ then the natural morphism
    \[
        \pi_1^{\mathrm{alg}}(Y) \longrightarrow \pi_1^{\mathrm{alg}}(X)
    \]
    is surjective.
\end{cor}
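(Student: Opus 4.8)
The plan is to argue by contraposition, deducing the statement from Proposition \ref{P:pi1} together with the characterization of infinite Ueda type provided by Lemma \ref{L:log unitary}. Concretely, I assume that the natural morphism $\pi_1^{\mathrm{alg}}(Y) \to \pi_1^{\mathrm{alg}}(X)$ is \emph{not} surjective and aim to conclude that $\utype(Y) = \infty$, which contradicts the hypothesis $\utype(Y) < \infty$.

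First I would verify that $D = Y$ fits the hypotheses of Proposition \ref{P:pi1}. The divisor $Y$ is effective, compact, and connected, being a smooth irreducible hypersurface. Since $\mathcal O_X(Y)_{|Y} \simeq N_Y$ and the normal bundle is numerically trivial, we have $c_1(N_Y) = 0$ in $H^2(Y,\mathbb Q)$; the self-intersection formula $[Y]^2 = \iota_*\big(c_1(N_Y)\big)$ then yields $[Y]^2 = 0$ in $H^4(X,\mathbb Q)$. Thus all the assumptions of Proposition \ref{P:pi1} are met, and the non-surjectivity assumption furnishes, perhaps after replacing $X$ by a degree two étale covering $r : \tilde X \to X$, a global closed logarithmic $1$-form $\omega$ with purely imaginary periods whose residue, in a neighborhood of the preimage of $Y$, recovers $Y$. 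When no covering is needed this form already lives on $X$ and satisfies $\Res \omega_{|U} = Y$, so Lemma \ref{L:log unitary} applies directly and gives $\utype(Y) = \infty$.

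The only real subtlety is to transport the conclusion across the étale covering, and this is where I would be most careful. Recall from the proof of Proposition \ref{P:pi1} that the degree two cover is constructed precisely so that $r^{-1}(Y)$ disconnects; writing $r^{-1}(Y) = Y_1 \sqcup Y_2$, each component maps isomorphically onto $Y$, and the rescaling in that proof arranges the residue of $\omega$ along $Y_1$ to have coefficient one. Since $\utype$ depends only on the germ of the pair (it is read off from the restrictions of the Ueda line bundle to the infinitesimal neighborhoods), and since an étale map restricts to an isomorphism between the formal completion of $\tilde X$ along $Y_1$ and the completion $\mathscr Y$ of $X$ along $Y$, one has $\utype(Y_1) = \utype(Y)$. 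Restricting $\omega$ to the formal neighborhood of $Y_1$ produces a closed logarithmic form with $\Res = Y_1$ and purely imaginary periods, the periods of the restriction being among those of the global form. Lemma \ref{L:log unitary} then gives $\utype(Y_1) = \infty$, whence $\utype(Y) = \infty$, the desired contradiction. The main point to check rigorously is exactly this descent: that the restricted form has residue $Y_1$ with coefficient one and purely imaginary periods, so that Lemma \ref{L:log unitary} applies verbatim and the Ueda type is genuinely invariant under the local isomorphism induced by $r$.
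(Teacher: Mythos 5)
Your proof is correct and takes essentially the same route as the paper: the contrapositive, an application of Proposition~\ref{P:pi1} (after checking $Y^2=0$ via the numerically trivial normal bundle), and then Lemma~\ref{L:log unitary}. The only difference is that the paper's proof is terser and silently passes over the possible degree-two \'etale covering, whereas you carry out the descent explicitly (components of the preimage mapping isomorphically onto $Y$, invariance of the Ueda type under the induced isomorphism of formal completions); that extra care is sound and fills in a detail the paper leaves implicit.
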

\begin{proof}
    Let us prove the contrapositive assertion. For that assume that the morphism in question is not surjective. Proposition \ref{P:pi1} implies the existence of $1$-form $\omega$ with purely imaginary periods and $\Res(\omega) = Y$. Lemma \ref{L:log unitary} implies  $\utype(Y) = \infty$ as wanted.
\end{proof}

\subsection{Hodge theory for unitary flat line-bundles}
Let $X$ be a compact Kähler manifold and $\mathbb L$ be a rank one local system with unitary monodromy. The line-bundle $\mathcal L = \mathbb L \otimes_{\mathbb C} \mathcal O_X$ comes endowed with a canonical flat unitary holomorphic connection $\nabla : \mathcal L \to \mathcal L \otimes \Omega_X^1$ characterized by the property that the local system of flat sections of $\nabla$ is  equal to $\mathbb L \otimes 1 \subset \mathcal L$.

Harmonic theory on compact Kähler manifolds adapts to the study of harmonic forms with coefficients on $\mathcal L$. In particular, the following consequence of the $\partial \overline{ \partial }$-lemma also holds in this more general context, see for instance \cite[(3.3)]{Beauville}.

\begin{lemma}\label{L:Beauville}
    The homomorphisms
    \[
	H^q(\nabla) : H^q(X,\Omega^{p}_X \otimes \mathcal L) \to H^{q}(X, \Omega^{p+1}_X \otimes \mathcal L)
    \]
    are zero.
\end{lemma}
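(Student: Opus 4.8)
The plan is to prove this by transporting the classical Kähler harmonic theory to the twisted coefficient system $\mathcal L$, reproducing the proof that the Frölicher spectral sequence degenerates at $E_1$. Since the local system $\mathbb L$ has unitary monodromy, the associated bundle $\mathcal L$ carries a flat Hermitian metric $h$ whose parallel transport is unitary; as $\nabla^{0,1} = \bar\partial$ encodes the holomorphic structure and $h$ is $\nabla$-parallel, $\nabla$ is the Chern connection of $(\mathcal L, h)$ and its curvature $\nabla^2$ vanishes. Writing $\nabla = \partial_{\mathcal L} + \bar\partial$ for the decomposition into $(1,0)$ and $(0,1)$ parts, flatness gives $\partial_{\mathcal L}^2 = 0$ and the anticommutation $\partial_{\mathcal L}\bar\partial = -\bar\partial\partial_{\mathcal L}$. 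In particular $\partial_{\mathcal L}$ sends $\bar\partial$-closed forms to $\bar\partial$-closed forms and acts on the Dolbeault resolution, so that the map $H^q(\nabla)$ on $H^q(X,\Omega^p_X\otimes\mathcal L) = H^{p,q}_{\bar\partial}(X,\mathcal L)$ is induced, at the level of smooth forms, by $\partial_{\mathcal L}$.

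First I would record that, because the curvature of $(\mathcal L, h)$ is zero, the Kähler identities hold in exactly the same form as with trivial coefficients, namely $[\Lambda, \bar\partial] = -\sqrt{-1}\,\partial_{\mathcal L}^{*}$ and $[\Lambda, \partial_{\mathcal L}] = \sqrt{-1}\,\bar\partial^{*}$, where the Hodge star and the adjoints are computed with respect to the Kähler metric on $X$ and the parallel metric $h$ on $\mathcal L$. The quickest justification is local: around each point $\mathcal L$ is trivialized by flat sections, in which frame $h$ is constant, so these identities reduce verbatim to the scalar-valued Kähler identities on a coordinate polydisc; globally they are consistent because the transition functions between flat unitary frames are constants of modulus one, hence compatible with all the metric operators involved.

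The key consequence I would extract is the equality of Laplacians $\Delta_{\partial_{\mathcal L}} = \Delta_{\bar\partial}$, deduced from the identities above together with $\nabla^2 = 0$ exactly as in the untwisted case. Since $X$ is compact, Hodge theory for the elliptic self-adjoint operator $\Delta_{\bar\partial}$ acting on $\mathcal L$-valued forms provides, for each class in $H^q(X, \Omega^p_X \otimes \mathcal L)$, a unique $\bar\partial$-harmonic representative $\alpha$, so $\bar\partial\alpha = 0$ and $\bar\partial^{*}\alpha = 0$. By the equality of Laplacians, $\alpha$ is simultaneously $\partial_{\mathcal L}$-harmonic, and since $X$ is compact this forces $\partial_{\mathcal L}\alpha = 0$.

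Finally I would compute $H^q(\nabla)$ on these harmonic representatives. For a $\bar\partial$-harmonic $\alpha$ one has $\nabla\alpha = \partial_{\mathcal L}\alpha + \bar\partial\alpha = \partial_{\mathcal L}\alpha$, and $\bar\partial(\partial_{\mathcal L}\alpha) = -\partial_{\mathcal L}(\bar\partial\alpha) = 0$, so $[\partial_{\mathcal L}\alpha] \in H^{p+1,q}_{\bar\partial}(X,\mathcal L)$ represents the image of $[\alpha]$ under $H^q(\nabla)$. As $\partial_{\mathcal L}\alpha = 0$ by the previous step, this image vanishes, proving $H^q(\nabla) = 0$. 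The only genuine obstacle is the verification that flat unitarity makes the curvature term drop out of the Bochner–Kodaira–Nakano identity, so that the unperturbed Kähler identities are restored with coefficients in $\mathcal L$; everything else is the standard compact Kähler package applied to twisted forms, which is precisely the content of \cite[(3.3)]{Beauville} invoked in the statement.
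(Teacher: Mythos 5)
Your proof is correct and is essentially the argument the paper has in mind: the paper offers no proof of this lemma, simply remarking that Kähler harmonic theory adapts to unitary flat coefficients and citing Beauville (3.3), and your argument --- flat Hermitian metric, twisted Kähler identities checked in local flat unitary frames, $\Delta_{\bar\partial}=\Delta_{\partial_{\mathcal L}}$ from vanishing curvature, hence $\partial_{\mathcal L}$ kills harmonic representatives --- is precisely the content of that citation. The only cosmetic difference is that you conclude via equality of Laplacians on harmonic representatives rather than by invoking the twisted $\partial\bar\partial$-lemma the paper alludes to; both belong to the same standard package, since the $\partial\bar\partial$-lemma is itself proved by that equality.
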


The twisted De Rham complex $(\Omega^{\bullet}_X \otimes \mathcal L, \nabla)$ provides a resolution of $\mathbb L$, and therefore one deduces from Lemma \ref{L:Beauville}, as in the case of constant coefficients, a Hodge decomposition
\[
    H^i(X,\mathbb L) = \bigoplus_{p+q=i} H^p(X, \Omega^q_X \otimes \mathcal L) \, .
\]
Furthermore, complex conjugation of harmonic forms yields (sesqui-linear) isomorphisms
\[
    H^q(X, \Omega^p_X \otimes \mathcal L) \to H^p(X, \Omega^q_X \otimes \mathcal L^*) .
\]

We point out also that if $\mathcal L$ is a line-bundle with trivial Chern class on a compact Kähler manifold $X$
and we consider the unique  flat unitary connection $\nabla$ on it, then any global section $\omega$ of $H^0(X, \Omega^q_X \otimes \mathcal L)$
is automatically $\nabla$-closed since Stoke's Theorem implies
\[
    \int_X \nabla(\omega) \wedge \overline{\nabla(\omega)} \wedge \Theta^{\dim X-(q+1)} = 0
\]
for any Kähler form $\Theta$.

\subsection{Restriction of cohomology classes}

\begin{prop}\label{P:restriction}
    Let $D$ be an effective connected divisor on a compact Kähler manifold $X$ with $D^2 = 0$ in $H^4(X,\mathbb Q)$. Let $(\mathcal L,\nabla)$ be a line bundle endowed with a flat unitary connection on $X$. If the restriction morphism
    \[
        H^1(X, \mathcal L) \longrightarrow H^1(|D|, \mathcal L)
    \]
    is not injective then, perhaps after replacing $X$ by a degree two étale covering, there exists a global closed logarithmic $1$-form $\omega$ with purely imaginary periods such that
    \[
        \Res \omega_{|U} = D
    \]
    where $U$ is a  sufficiently small neighborhood of $|D|$.
\end{prop}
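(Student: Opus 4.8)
The plan is to convert the cohomological hypothesis into the existence of a twisted holomorphic $1$-form that dies along $|D|$, and then to feed the resulting geometry into Proposition \ref{P:pi1}. First I would invoke the Hodge decomposition of $H^1(X,\mathbb L)$ furnished by Lemma \ref{L:Beauville}, together with the conjugation isomorphism $H^1(X,\mathcal L)\simeq \overline{H^0(X,\Omega^1_X\otimes\mathcal L^*)}$ recorded just after it. Since restriction to $|D|$ is induced at the level of the local system $\mathbb L$, it is a morphism of (mixed) Hodge structures and commutes with conjugation; hence non-injectivity of $H^1(X,\mathcal L)\to H^1(|D|,\mathcal L)$ is equivalent to non-injectivity of the conjugate map $H^0(X,\Omega^1_X\otimes\mathcal L^*)\to H^0(\widetilde{|D|},\Omega^1_{\widetilde{|D|}}\otimes\mathcal L^*)$ on a resolution $\widetilde{|D|}\to|D|$. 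This produces a nonzero $\eta\in H^0(X,\Omega^1_X\otimes\mathcal L^*)$ whose pullback to $|D|$ vanishes. Because $\mathcal L^*$ is flat unitary, $\eta$ is automatically $\nabla$-closed, and the vanishing of $\iota^*\eta$ says precisely that $|D|$ is invariant by the codimension-one foliation $\mathcal F=\{\eta=0\}$, whose normal bundle is the flat unitary line bundle $\mathcal L^*$.

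The next step is to extract from $\eta$ the input needed by Proposition \ref{P:pi1}, namely the non-surjectivity of $\pi_1^{\mathrm{alg}}(|D|)\to\pi_1^{\mathrm{alg}}(X)$. Since $\eta\neq 0$ we have $H^1(X,\mathbb L^*)\neq 0$, and I would apply the structure theory of cohomology support loci already underlying Proposition \ref{P:pi1} (through \cite{Totaro} and \cite{PereiraJAG}): either the character of $\mathcal L$ has finite order, or it lies on a positive-dimensional component of the jump locus, which forces $\eta$ to be pulled back from a fibration $f:X\to C$ onto a curve of genus $\geq 1$. In this fibration case the connected invariant divisor $|D|$, having $D^2=0$, is contained in—hence equals the support of—a fiber of $f$; as $\pi_1^{\mathrm{alg}}(C)$ is infinite while the image of $\pi_1^{\mathrm{alg}}(|D|)$ dies in it, the morphism $\pi_1^{\mathrm{alg}}(|D|)\to\pi_1^{\mathrm{alg}}(X)$ cannot be surjective. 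When $\mathcal L$ has finite order I would pass to the cyclic étale cover $r:Y\to X$ trivializing it: if $r^{-1}(|D|)$ is disconnected we already have non-surjectivity of $\pi_1^{\mathrm{alg}}$, and otherwise $\eta$ lifts to an honest closed holomorphic $1$-form on $Y$ with a connected invariant hypersurface of self-intersection zero, whose Albanese-Stein factorization again produces a positive-genus fibration; this fibration, being canonical, descends to $X$ and exhibits $|D|$ as a fiber. In every case the hypothesis of Proposition \ref{P:pi1} is met, and that proposition delivers the global closed logarithmic $1$-form with purely imaginary periods and $\Res\omega_{|U}=D$.

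The hard part is the second step: passing from the purely analytic input—a degree-one cohomology class with coefficients in $\mathcal L$ that dies on $|D|$—to the profinite conclusion required by Proposition \ref{P:pi1}, namely a finite quotient of $\pi_1(X)$ separating $|D|$. Non-injectivity of $H^1$ does \emph{not} by itself produce such a quotient (the delicate case being $\mathcal L$ torsion with full monodromy along $|D|$, where the natural cyclic cover keeps $|D|$ connected), so one genuinely has to extract a fibration from the closed twisted $1$-form and verify that $|D|$ sits as an entire fiber; controlling that the compact invariant hypersurface forces compact leaves is where the real work lies. Finally, ensuring that the logarithmic form obtained through Proposition \ref{P:pi1} has residue exactly $D$ with the multiplicities of $D$—and not merely those of $D_{\mathrm{red}}$—together with purely imaginary periods, is what ties the argument back to the Hodge-index computation in the degree-two case of that proposition.
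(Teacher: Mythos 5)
Your reduction target is the same as the paper's---non-surjectivity of $\pi_1^{\alg}(|D|)\to\pi_1^{\alg}(X)$ followed by an application of Proposition \ref{P:pi1}---and your first step (Hodge decomposition via Lemma \ref{L:Beauville} plus conjugation, producing a nonzero $\eta\in H^0(X,\Omega^1_X\otimes\mathcal L^*)$ vanishing on $|D|$) is a legitimate, slightly stronger variant of the paper's one-line appeal to functoriality of the Hodge decomposition. The divergence, and the trouble, lies in how you pass from $\eta$ to the profinite conclusion. The paper does this with a short, uniform argument that bypasses all of your structure theory: a nonzero class $\alpha\in H^1(X,\mathbb L)$ killed by restriction to $|D|$ determines a representation $\widehat\rho:\pi_1(X)\to\Aff(\C)$ with linear part $\rho$ whose restriction to $\iota_*\pi_1(|D|)$ has abelian image while $\widehat\rho$ itself does not; since finitely generated linear groups are residually finite (Malcev), this discrepancy is detected by a finite quotient, and non-surjectivity of $\pi_1^{\alg}(|D|)\to\pi_1^{\alg}(X)$ follows at once, with no case distinction between torsion and non-torsion $\mathcal L$, no jump loci, no Albanese, and no coverings.

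Your route, by contrast, has genuine gaps. The decisive one is the final inference in the torsion case: ``this fibration \ldots descends to $X$ and exhibits $|D|$ as a fiber; in every case the hypothesis of Proposition \ref{P:pi1} is met.'' Exhibiting $|D|$ as (the support of) a fiber of a fibration does \emph{not} imply that $\pi_1^{\alg}(|D|)\to\pi_1^{\alg}(X)$ fails to be surjective: any fiber of $E\times\mathbb P^1\to\mathbb P^1$ induces an isomorphism on fundamental groups. Your descent step is exactly where this bites---the base of the fibration on the cover $Y$ has genus $\geq 1$, but its quotient by the deck group, which is the base of the descended fibration on $X$, can perfectly well be $\mathbb P^1$. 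In that situation non-surjectivity of $\pi_1^{\alg}$ is simply unavailable, and the correct move is to abandon Proposition \ref{P:pi1} and construct $\omega$ directly from the fibration (Zariski's lemma shows that $D$, being supported on a fiber and satisfying $D^2=0$, is a rational multiple of that fiber, and one pulls back a differential of the third kind with purely imaginary periods from the base---this is what the proof of Proposition \ref{P:pi1} itself does in its fibration case); as written, your argument does not close this case. Two further steps are asserted without justification: in the non-torsion case, membership of $\mathcal L$ in a positive-dimensional component of the jump locus does not by itself force the \emph{specific} class $\eta$ to be pulled back from the base (at non-generic points of a component, e.g.\ intersections of components, $H^1(X,\mathbb L^*)$ may contain classes not coming from the curve), and your Albanese--Stein argument needs the image of $Y$ in the quotient torus to be a curve, which is not automatic when $\dim X>2$ and requires an additional argument to rule out.
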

\begin{proof}
    Let $\mathbb L$ be the local system of flat sections of $\mathcal L$.
    Functoriality of Hodge decomposition implies that the restriction morphism
    \[
            H^1(X, \mathbb L) \longrightarrow H^1(|D|, \mathbb L) \,
    \]
    is also not injective. Let $\alpha$ be a non-zero element in its kernel.

    If $\rho : \pi_1(X) \to \mathbb C^*$ is the monodromy representation of the
    local system $\mathbb L$ then any cohomology class $0 \neq \alpha \in H^1(X, \mathbb L)$ corresponds to a non-abelian representation $\hat \rho : \pi_1(X) \to \Aff(\C)$ with linear part given by $\rho$.

    If $\alpha$ restricts to zero in $H^1(|D|, \mathbb L)$ then the composition
    \begin{center}
    \begin{tikzcd}
           \pi_1(|D|) \arrow[r, "\iota_*" ] \arrow[rr, bend left=20] &  \pi_1(X) \arrow[r, "\widehat{\rho}"] &  \Aff(\C) \,
    \end{tikzcd}
    \end{center}
    has abelian image.

    Since $\Aff(\C)$ is a linear algebraic group and finitely generated subgroups of linear algebraic groups are residually finite (Malcev's Theorem), the morphism $\widehat{\rho}$ factors through the canonical morphism $\pi_1(X) \to \pi_1^{\alg}(X)$. Consequently, the induced composition
    $\pi_1^{\alg}(|D|) \to \pi_1^{\alg}(X) \to  \Aff(\C)$
    also has abelian image, while the image of the rightmost arrow coincides with the image of $\widehat{\rho}$ and, in particular, is non-abelian.
    Thus the natural morphism
    \[
        \pi_1^{\alg}(|D|) \longrightarrow \pi_1^{\alg}(X)
    \]
    is not surjective. We apply Proposition \ref{P:pi1} to conclude.
\end{proof}

\subsection{Proof of Theorem \ref{THM:closed}}
    Since $\mathcal L$ is a line bundle with zero first Chern class, there exists a unique unitary flat connection $\nabla : \mathcal L \to \mathcal L \otimes \Omega^1_X$ on $\mathcal L$. Let $\mathbb L$ be the local system of flat sections of $\mathcal L$ with respect to $\nabla$.

    Assume $\mathcal O(kD)_{|kD}  \simeq \mathcal L_{|kD}$. In terms of a sufficiently fine open covering $\{U_i\}$ of $X$, this implies the existence of holomorphic functions $f_i \in \mathcal O_X(U_i)$ defining $D_{|U_i}$, complex numbers $\lambda_{ij}$  of modulus $1$, and holomorphic functions $r_{ij} \in \mathcal O_X(U_i\cap U_j)$  such that
    \[
        f_i^k = \left(\lambda_{ij} + f_j^k \cdot r_{ij}\right) \cdot f_j^k
    \]
    over the non-empty intersections $U_i \cap U_j$. This identity implies that the functions $a_{ij}$ defined by the formula
    \begin{equation}\label{E:formula}
        a_{ij} = \frac{1}{f_i^k} - \frac{1}{\lambda_{ij}} \cdot \frac{1}{f_j^k}
    \end{equation}
    are holomorphic on $U_i \cap U_j$. From its definition, its is clear that the collection
    $\{a_{ij} \}$ determines an element of $H^1(X, \mathcal L^*)$.

    Lemma \ref{L:Beauville} implies that class of $\{ da_{ij} \}$ in $H^1(X, \Omega^1_X \otimes \mathcal L^*)$ is zero. Therefore, perhaps after refining the open covering $\{U_i\}$, we can write over $U_i\cap U_j$.
    \[
        d a_{ij} = \alpha_i - \frac{1}{\lambda_{ij}} \cdot \alpha_j
    \]
    for suitable holomorphic $1$-forms $\alpha_i \in \Omega^1_X(U_i)$.

    Therefore the $1$-forms
    \[
       \omega_i =  d \left( \frac{1}{f_i^k} \right) - \alpha_i
    \]
    satisfy
    \[
        \omega_i = \frac{1}{\lambda_{ij}} \cdot \omega_j \,
    \]
    and hence define a global  rational $1$-form $\omega$ with coefficients in $\mathcal L^*$.

    It remains to verify that $\omega$ is closed. For that, let $\Theta$ be a Kähler form and  observe that $d \omega$ is clearly holomorphic. Stoke's Theorem implies
    \[
        \int_X  d \omega \wedge \overline{d \omega} \wedge \Theta^{\dim X-2} = \lim_{\varepsilon \to 0} \int_{\partial T_{\varepsilon}} \omega \wedge \overline{d\omega} \wedge \Theta^{\dim X-2}
    \]
    where  $T_{\varepsilon}$ is an $\varepsilon$-small tubular neighborhood of $|D|$. Since the right hand side is clearly equal to
    zero, it follows that $\omega$ is closed.

    If we further assume that  $\mathcal O(kD)_{|kD + D_{red}}  \simeq \mathcal L_{|kD + D_{red}}$ then the     restriction of $\{ a_{ij} \}$ to the support of $D$ is zero. We have two possibilities: the cohomology class determined by  $\{a_{ij}\}$   in $H^1(X,\mathcal L^*)$ is zero, or not. If it  is zero then we can assume that $\{a_{ij}\} = 0$. We  construct the sought  logarithmic $1$-form  by taking the logarithmic differential of Equation (\ref{E:formula}).
    If instead, the class of $\{a_{ij}\}$ is not zero in $H^1(X,\mathcal L^*)$ then we deduce that the restriction morphism $H^1(X,\mathcal L^*) \to H^1(|D|, \mathcal L^*)$     is not injective. We apply Proposition \ref{P:restriction} to conclude.
\qed

\subsection{Bound on the Ueda type of hypersurfaces}
Let $Y$ be a smooth hypersurface of a projective manifold $X$. Define
$\Pic^{\tor}(Y/X)$ as the cokernel of the restriction morphism
\[
    \Pic^{\tor}(X) \longrightarrow \Pic^{\tor}(Y) \, .
\]

Theorem \ref{THM:closed} admits the following immediate consequence.

\begin{cor}
Let $Y$ be a smooth hypersurface of a compact Kähler manifold $X$. Assume $N_Y$ is numerically trivial and that the order of the normal bundle of $Y$ in $\Pic^{\tor}(Y/X)$ is a finite integer $k$. Then $\utype(Y) \le k$ or $\utype(Y) = \infty$.
\end{cor}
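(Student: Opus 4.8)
The plan is to deduce the corollary directly from Theorem \ref{THM:closed} by reformulating the hypothesis on the order of $N_Y$ in $\Pic^{\tor}(Y/X)$ into the restriction isomorphisms required by that theorem. First I would unwind the definition of $\Pic^{\tor}(Y/X)$: the assumption that $N_Y$ has order $k$ in the cokernel of $\Pic^{\tor}(X) \to \Pic^{\tor}(Y)$ means precisely that $N_Y^{\otimes k}$ lies in the image of the restriction map, i.e. there exists a torsion line bundle $\mathcal M \in \Pic^{\tor}(X)$ with $\mathcal M_{|Y} \simeq N_Y^{\otimes k}$. Since $N_Y = \mathcal O_X(Y)_{|Y}$, setting $\mathcal L = \mathcal M$ gives $\mathcal O_X(kY)_{|Y} \simeq \mathcal L_{|Y}$, which is exactly the standing hypothesis of Theorem \ref{THM:closed} with $D = Y$.

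The dichotomy $\utype(Y) \le k$ or $\utype(Y) = \infty$ should then emerge from comparing $k$ with the Ueda type. I would argue the contrapositive: suppose $\utype(Y) > k$, so that $\mathcal U_{|Y(k)} \simeq \mathcal O_{Y(k)}$, where $\mathcal U = \mathcal O_X(Y) \otimes \widetilde{N_Y}^*$ is the Ueda line bundle. The point is that triviality of $\mathcal U$ on the $k$-th infinitesimal neighborhood $Y(k) = kY + Y$ (here using $Y$ smooth so $D_{red} = Y$ and the relevant neighborhood $kD + D_{red}$ coincides with a suitable infinitesimal neighborhood) translates into the isomorphism $\mathcal O(kY)_{|kY + Y} \simeq \mathcal L_{|kY+Y}$ needed to invoke Item (1) of Theorem \ref{THM:closed}. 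That item then produces, possibly after a degree-two étale cover, a global closed logarithmic $1$-form $\omega$ with purely imaginary periods and $\Res \omega = Y$ near $Y$; by Lemma \ref{L:log unitary} this forces $\utype(Y) = \infty$. Thus the only way to have $\utype(Y) > k$ is $\utype(Y) = \infty$, giving the claimed dichotomy.

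The main obstacle I anticipate is the careful bookkeeping relating the infinitesimal neighborhoods appearing in the definition of Ueda type to the divisors $kD$ and $kD + D_{red}$ in Theorem \ref{THM:closed}, together with reconciling the twist by $\widetilde{N_Y}$ versus $\mathcal L$. One must check that the hypothesis $\utype(Y) > k$, which says $\mathcal U$ is trivial on $Y(k)$, really delivers the isomorphism $\mathcal O(kD)_{|kD+D_{red}} \simeq \mathcal L_{|kD+D_{red}}$ rather than merely $\mathcal O(kD)_{|kD} \simeq \mathcal L_{|kD}$; since $D = Y$ is reduced and smooth, $D_{red} = D$ and the distinction between the two items amounts to one extra infinitesimal order, so I would need to track precisely whether triviality of $\mathcal U$ through order $k$ lands in the setting of Item (1) (logarithmic, hence $\utype = \infty$) or Item (2) (a meromorphic form with poles, which does not by itself force infinite type). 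The cleanest route is likely to observe that for a smooth reduced $Y$ one has $\widetilde{N_Y}_{|Y} = N_Y$ and to use $\mathcal L \simeq \widetilde{N_Y}^{\otimes k}$ restricted appropriately, so that triviality of $\mathcal U^{\otimes 1}$ on the correct infinitesimal neighborhood matches Item (1) and Lemma \ref{L:log unitary} closes the argument.
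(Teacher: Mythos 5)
Your proposal is correct and follows essentially the same route as the paper's proof: unwind the order-$k$ hypothesis to get $\mathcal L \in \Pic^{\tor}(X)$ with $\mathcal O_X(kY)_{|Y} \simeq \mathcal L_{|Y}$, then observe that $\utype(Y) > k$ yields the hypothesis $\mathcal O_X(kY)_{|(k+1)Y} \simeq \mathcal L_{|(k+1)Y}$ of Item (1) of Theorem \ref{THM:closed}, whose logarithmic $1$-form forces $\utype(Y) = \infty$ via Lemma \ref{L:log unitary}. The bookkeeping point you flag (reconciling $\widetilde{N_Y}^{\otimes k}$ with $\mathcal L$ on $Y(k) = (k+1)Y$) is settled exactly as you suggest -- both are flat unitary and agree on $Y$, hence agree on a tubular neighborhood -- a step the paper's own two-line proof leaves implicit.
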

\begin{proof}
    Since, by assumption, the order of the normal bundle of $Y$ in $\Pic^{\tor}(Y/X)$ is equal to $k$, there exists
    a line-bundle $\mathcal L  \in \Pic^{\tor}(X)$ such that $N_Y^{\otimes k} = \mathcal L$, or equivalently, $\mathcal O_X(kY)_{|Y} \simeq \mathcal L_{|Y}$.

    If $\utype(Y)>k$ then $\mathcal O_X(kY)_{|(k+1)Y} \simeq \mathcal L_{|(k+1)Y}$ and Item (1) of Theorem \ref{THM:closed}
    implies $\utype(Y)=\infty$.
\end{proof}

This result generalizes  \cite[Theorem 5.1]{Neeman}.

\section{Convergence of formal isomorphisms }\label{S:A}
This section is devoted to the proof of Theorem \ref{THM:A} from the Introduction.

\subsection{Formal diffeomorphisms preserving closed differential forms}
The convergence of formal diffeomorphism will be implied by the
following simple application of Artin's approximation theorem.

\begin{lemma}\label{L:converge}
    Let $\phi : \widehat{(\C^2,0)} \to \widehat{(\C^2,0)}$ be a formal biholomorphism.
    Suppose the existence of two pairs $(\alpha_1, \beta_1)$ and $(\alpha_2, \beta_2)$ of
    convergent exact meromorphic $1$-forms on $(\mathbb C^2,0)$ satisfying the following properties.
    \begin{enumerate}
        \item Both $\alpha_1 \wedge \beta_1$ and $\alpha_2 \wedge \beta_2$ are not identically zero.
        \item There exist constants $\mu, \nu \in \mathbb C^*$ such that $\phi^* \alpha_2 = \mu \alpha_1$ and $\phi^* \beta_2 = \nu \beta_1$
    \end{enumerate}
    Then $\phi$ is the restriction to $\widehat{(\C^2,0)}$ of a germ of bihomolomorphism, i.e. $\phi$ is convergent.
\end{lemma}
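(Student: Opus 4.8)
The plan is to turn the two conditions on pulled-back forms into ordinary (derivative-free) functional equations and then appeal to Artin's approximation theorem. Since $\alpha_1,\beta_1,\alpha_2,\beta_2$ are exact, write $\alpha_i = df_i$ and $\beta_i = dg_i$ for convergent meromorphic primitives $f_i,g_i$. The hypothesis $\phi^*\alpha_2 = \mu\alpha_1$ yields $d(f_2\circ\phi) = \mu\,df_1$, hence $f_2\circ\phi = \mu f_1 + c_1$ for a constant $c_1$; likewise $g_2\circ\phi = \nu g_1 + c_2$. Setting $F_i = (f_i,g_i)$ and letting $L$ be the affine map $(s,t)\mapsto(\mu s + c_1,\ \nu t + c_2)$, the formal biholomorphism $\phi$ becomes a solution of
\[
    F_2\circ\phi = L\circ F_1 .
\]
Here $\alpha_2\wedge\beta_2\neq0$ says $df_2\wedge dg_2\neq0$, so $F_2$ is a dominant (generically finite) meromorphic germ, and similarly for $F_1$.

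After clearing the denominators of the $f_i,g_i$, the displayed identity becomes a system of genuine analytic equations $H(x,y;w)=0$ in the unknown $w=\phi$, with $H$ holomorphic in all its arguments and $\mu,\nu,c_1,c_2$ regarded as fixed complex constants. By construction $\phi$ is a formal solution. Artin's approximation theorem then supplies, for every integer $c$, a \emph{convergent} solution $\tilde\phi$ with $\tilde\phi\equiv\phi$ up to order $c$. For $c\ge2$ the $1$-jets of $\tilde\phi$ and $\phi$ coincide, so $\tilde\phi$ is itself a germ of biholomorphism. It therefore remains to show that, for $c$ chosen large enough, this convergent solution must actually equal $\phi$.

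This is the heart of the matter, and it rests on the \textbf{discreteness} of the set of formal solutions. If $\psi$ is another formal biholomorphic solution of $F_2\circ\psi = L\circ F_1$, then $\tau=\psi\circ\phi\inv$ is a formal automorphism fixing the origin with $F_2\circ\tau = F_2$, equivalently $\tau^*\alpha_2=\alpha_2$ and $\tau^*\beta_2=\beta_2$. Because $\alpha_2\wedge\beta_2\not\equiv0$, the forms $\alpha_2,\beta_2$ form a coframe away from a proper analytic subset, so such a $\tau$ is a formal deck transformation of the generically finite map $F_2$; there are only finitely many of these. Consequently the finitely many formal solutions are already pairwise distinct at some bounded finite order $N$. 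Taking $c>N$ in the previous paragraph, the convergent solution $\tilde\phi$, which agrees with $\phi$ to order $c$, cannot equal any $\tau\circ\phi$ with $\tau\neq\mathrm{id}$, and hence $\tilde\phi=\phi$. Thus $\phi$ is convergent.

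The main obstacle is precisely this finiteness: controlling the formal automorphisms $\tau$ satisfying $\tau^*\alpha_2=\alpha_2$ and $\tau^*\beta_2=\beta_2$, and verifying that only finitely many occur. The example $F_2=(x^2,y)$, with $\tau(x,y)=(-x,y)$, shows that naive uniqueness fails and that some such discreteness argument is unavoidable; it is the nondegeneracy $\alpha_2\wedge\beta_2\neq0$ that rules out a positive-dimensional family of symmetries. Some additional care is also needed to handle the poles of the meromorphic forms, both when clearing denominators so that Artin's theorem applies and when selecting the primitives $f_i,g_i$ so that the integration constants $c_1,c_2$ are well defined.
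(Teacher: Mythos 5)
Your reduction to the functional equation $F_2\circ\phi = L\circ F_1$ and the appeal to Artin approximation reproduce the paper's first two steps faithfully; the problem is the rigidity step that you yourself call ``the heart of the matter'', which in your write-up is asserted rather than proved, and whose proposed justification is in fact false. You pass freely between the condition $F_2\circ\tau = F_2$ and the condition $\tau^*\alpha_2=\alpha_2$, $\tau^*\beta_2=\beta_2$ (``equivalently''), and in your closing paragraph the group you propose to control is the group of formal automorphisms preserving the two forms, with the nondegeneracy $\alpha_2\wedge\beta_2\neq 0$ supposedly ruling out positive-dimensional families. These two conditions are not equivalent: preserving the exact forms only yields $f_2\circ\tau = f_2+\mathrm{const}$ and $g_2\circ\tau = g_2+\mathrm{const}$, and the form-preserving group can genuinely be positive-dimensional. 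Concretely, take $\alpha_2 = d(1/x) = -dx/x^2$ and $\beta_2 = dy$, so that $\alpha_2\wedge\beta_2\neq0$; then for every $c\in\mathbb{C}$ the automorphism $\tau_c(x,y) = \bigl(x/(1+cx),\,y\bigr)$ fixes the origin and satisfies $\tau_c^*\alpha_2=\alpha_2$ and $\tau_c^*\beta_2=\beta_2$. So the finiteness you invoke fails for the group you describe. What your argument actually needs is finiteness of the group of \emph{formal deck transformations} $\{\tau:\ F_2\circ\tau=F_2\}$, where the integration constants are pinned down by $L$. That statement is plausibly true (one could try to identify the formal graph of such a $\tau$ with an irreducible component of the fibre product $U\times_{F_2}U$ and argue that such components are automatically convergent), but it is not an off-the-shelf fact for dominant \emph{meromorphic} map germs and formal automorphisms, and you offer no proof; this is a genuine gap, located exactly at the step carrying all the difficulty.

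The paper closes this step by a different and cheaper mechanism, which requires only discreteness \emph{at the identity} rather than global finiteness of a symmetry group (and indeed, as the example above shows, global finiteness can fail for the form-preserving group). Writing $\psi_N=\phi\circ\varphi_N^{-1}$, which is tangent to the identity to order $N$ and satisfies $\psi_N^*\alpha_2=\alpha_2$, $\psi_N^*\beta_2=\beta_2$, one has $\psi_N=\exp[1](v_N)$ for a formal vector field $v_N$ vanishing to order at least $N$. Since each jet of the flow of $v_N$ depends polynomially on the time variable, invariance of the forms under all iterates of $\psi_N$ propagates to all complex times, whence $L_{v_N}\alpha_2=L_{v_N}\beta_2=0$; closedness of $\alpha_2,\beta_2$ then gives $i_{v_N}\alpha_2,\ i_{v_N}\beta_2\in\mathbb{C}$, so $v_N=s_N w_a+t_N w_b$ lies in the fixed two-dimensional space spanned by the meromorphic frame $(w_a,w_b)$ dual to $(\alpha_2,\beta_2)$. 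Nonzero elements of that space vanish to bounded order, so $v_N=0$ once $N$ is large, i.e.\ $\phi=\varphi_N$ is convergent. This Lie-derivative computation, which crucially uses that the forms are closed, is precisely the ingredient missing from your proposal; if you prefer to keep your route, you must instead supply a complete proof of finiteness of the formal deck group of $F_2$, including the handling of poles and indeterminacy.
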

\begin{proof}
    Let $a_1, a_2, b_1, b_2$ be meromorphic first integrals of $\alpha_1, \alpha_2, \beta_1, \beta_2$ respectively.
    By suitably choosing the constants of integration, we may assume that $\phi^* a_2 = \mu a_1$ and $\phi^* b_2 = \nu b_1$.
    Moreover, if $0$ is not a pole of $a_i$ then we choose $a_1$ and $a_2$ such that $a_1(0)= a_2(0) = 0$. Similarly for $b_i$.

    Consider the system of equations
    \begin{equation}\label{E:system}
    \begin{cases}
        \mu \cdot a_1(x_1,y_1) -  a_2(x_2,y_2)  = 0 \, ,\\
        \nu \cdot b_1(x_1,y_1) -  b_2(x_2,y_2)  = 0 \, .\\
    \end{cases}
    \end{equation}
    Our assumptions imply that  $(x_2,y_2) = \phi(x_1,y_1)$ is a formal solution for the system (\ref{E:system}). For every $N \in \mathbb N$, Artin's approximation theorem \cite{Artin}, implies the existence of
    a convergent solution $(x_2,y_2) = \varphi_N(x_1,y_1)$ such that the Taylor expansion of $\varphi_N$  coincides with the one of $\phi$ up to  order $N$.
    For $N\geq 1$, the application $\varphi_N$ is a diffeomorphism.

    Consider the formal biholomorphism $ \psi_N = \phi\circ {\varphi_N}^{-1} : \widehat{(\C^2,0)} \to \widehat{(\C^2,0)}$.
    Note that $\psi_N$, and all its iterates, satisfy
    \begin{equation}\label{E:psi}
        \psi_N^* \alpha_2 = \alpha_2 \quad \text{ and } \quad \psi_N^* \beta_2 = \beta_2 \, .
    \end{equation}
    Since $\psi_N$ is tangent to the identity up to order $N$, there exists a formal vector field $v_N$ on $\widehat{(\C^2,0)}$ vanishing up to order at least $N$, such that $\psi_N  = \exp[1](v_N)$,
    i.e. $\psi_N$ is the flow of $v_N$ at time one.

    If $N\ge 2$  then  the formal vector field $v_N$ has no linear term and the  expansion of its formal flow $(t,x,y) \mapsto (\exp[t](v_N))(x,y)$, can be written as
    \[
        (t,x,y) \mapsto \sum_{i,j} p_{ij}(t)x^iy^j
    \]
    where $p_{ij} \in \mathbb C[t]$ are polynomials.  Therefore, the validity of  Equation (\ref{E:psi}) for all iterates of $\psi_N$ implies also the validity of Equation (\ref{E:psi}) for the formal bilomorphisms $(x,y) \mapsto (\exp[t](v))(x,y)$  when $t \in \mathbb C$ is arbitrary. Consequently $L_{v_N} \alpha_2 = L_{v_N} \beta_2 =0$, where $L_v = i_v d + di_v$ is the Lie derivative along $v$. As both $\alpha_2$ and $\beta_2$ are closed $1$-forms, we deduce that both $i_{v_N} \alpha_2 = s_N$ and $i_{v_N} \beta_2 = t_N$ belong to $\mathbb C$.
    Let $w_a$ and $w_b$ be meromorphic vector fields such that $\alpha_2(w_a)=1, \alpha_2(w_b)= 0, \beta_2(w_a)= 0,$ and $\beta_2(w_b)=1$. Since $\alpha_2 \wedge \beta_2 \neq 0$, the vector fields $w_a$ and $w_b$ are uniquely determined by these equations. Notice also that
    \[
        v_N = s_N \cdot w_a + t_N \cdot w_b  \, .
    \]
    Since the order of $v_N$ is at least $N$ we deduce that $v_N =0$  for $N$ sufficiently large. If $N\gg 0$ then $\psi_N$  is the identity and we conclude that $\phi$ coincides with the convergent biholomorphism $\varphi_N$.
\end{proof}

\subsection{Proof of Theorem  \ref{THM:A} } Let $S$ be a smooth projective surface
and let $C \subset S$ be a smooth curve with trivial normal bundle. Let $S'$ and $C'$
be another pair with the same properties. Let  $\mathscr C$ be  the completion of $S$
along $C$,  and let $\mathscr C'$ be the completion of $S'$ along $C'$. Assume
the existence of a formal biholomorphism $\widehat{\varphi} : \mathscr C \to \mathscr C'$.

If $C$ is a fiber of a fibration on $S$ then the same holds for $C'$ and the existence
of a  germ of biholomorphism between neighborhoods of $C$ and $C'$ follows from the
main result of \cite{Hirschowitz}.

Assume from now on that $C$ is not a fiber of a fibration. Theorem \ref{THM:closed} implies that
$\utype(C) =1$ and $\uclass(C) \in H^1(C, \mathcal I_C/\mathcal I_C^2) \simeq H^1(C, \mathcal O_C)$.
Moreover, there exists a cohomology class $a \in H^1(S,\mathcal O_S)$ such that $\uclass(C)\in H^1(C, \mathcal O_C)$ is given by the restriction of $a$ to $C$. Note that the class $a$ is the class of
the cocycle $\{a_{ij}\}$ appearing in the proof of Theorem \ref{THM:closed} in Equation (\ref{E:formula}). Let $\alpha \in H^0(S, \Omega^1_S)$ be a global holomorphic $1$-form such that $\overline{\alpha}$ coincides with the cohomology class $a \in H^1(S, \mathcal O_S)$. Note that the pull-back of $\alpha$ to
$C$ is non-zero. In particular, the foliation defined by $\alpha$ is generically transverse to $C$.

Let also $\omega \in H^0(S, \Omega^1_S (2C))$ be the closed rational $1$-form with $(\omega)_{\infty}=2C$
given by Theorem \ref{THM:closed}. The $\mathbb C$-vector space generated by the twisted $1$-form $\omega$ is not unique. The ambiguity, of course, comes from the inclusion
\[
    H^0(S, \Omega^1_S) \longrightarrow H^0(S, \Omega^1_S( 2C) ) \, .
\]
In order to choose canonically  a one dimensional subspace $\mathbb C \cdot \omega \subset H^0(S, \Omega^1_S( 2C))$ consider the representation
\begin{align*}
    \pi_1(S) &\longrightarrow  \mathbb C \\
    \gamma & \mapsto \int_\gamma \omega \, .
\end{align*}
Although $\omega$ has poles, it has no residues, and the representation above is unambiguously defined.
This representation defines an element of $H^1(S,\mathbb{C})$ which is not canonically determined due to the ambiguity $H^0(S,\Omega^1_S)$.
However its class in $H^1(S,\mathbb{C})/H^0(S,\Omega^1_S)$ is unique and by construction is given by the cohomology class $a\in H^1(S,\mathcal{O}_S)$.
Thus we can choose $\mathbb C \cdot \omega$  inside  the vector space $H^0(S, \Omega^1_S( 2C) )$ by imposing that the periods of $\omega$  are proportional to the periods of $\overline{\alpha}$, by Lemma \ref{L:restriction 1} this condition determines a one-dimensional subspace of $H^0(S,\Omega^1_S(2C))$.
Since the foliation defined by $\omega$ leaves the curve $C$ invariant while the foliation defined by $\alpha$ does not, the wedge product of $\alpha$ and $\omega$ does not vanish identically.

If $\alpha'$ and $\omega'$ are the analogue $1$-forms on $S'$ then  $\widehat{\varphi}^* \alpha'$ is proportional to $\alpha$ thanks to Lemma \ref{L:restriction 1}. Similarly, $\widehat{\varphi}^* \omega'$ is proportional to $\omega$. We apply Lemma \ref{L:converge} to conclude that $\widehat{\varphi}$ is the restriction to $\mathscr C$ of a germ of biholomorphism between the germ of $S$ along $C$ and the germ of $S'$ along $C'$. Furthermore, we can apply Item (\ref{I:sections}) of Corollary \ref{C:UedaProjective} to guarantee the existence of a rational map $\varphi: S \dashrightarrow S'$ such that $\varphi_{|\mathscr C} = \widehat{\varphi}$. Finally, $\varphi$ must be birational (i.e. of degree one) since otherwise the pre-image of $C$ would have two distinct irreducible components (all of them supporting divisors of zero self-intersection)  contradicting Item (\ref{I:Narasimhan}) of Corollary \ref{C:UedaProjective}. \qed

\begin{remark}
    Theorem \ref{THM:A} also holds for a smooth curve $C$ with
    torsion normal bundle of order $k$ in a projective surface $S$  and $\utype(C) \ge k$.
    The proof is  essentially the same. If $\utype(C)>k$ then Theorem \ref{THM:closed}
    implies $\utype(C)= \infty$. Consequently, $kC$ is a fiber of a fibration and the result
    follows from  \cite{Hirschowitz}. If instead $\utype(C)=k$ then the arguments used in the
    proof of Theorem \ref{THM:A} can be repeated almost word-by-word: the only difference is that the
    closed rational $1$-form $\omega$ will have in this case poles of order $k+1$ instead of poles of order
    $2$.
\end{remark}

\subsection{Divisors with trivial normal bundle}
Theorem \ref{THM:A} admits the following version for reduced divisors with trivial normal bundle.

\begin{thm}
    Let $(S,D)$ be pair where $S$ is a smooth projective surface and $D$ is a reduced divisor with trivial normal bundle, i.e. $\mathcal O_S(D)_{|D} \simeq \mathcal O_D$. Then $(S,D)$ satisfies the projective formal principle.
\end{thm}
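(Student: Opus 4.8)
The plan is to reduce the case of a reduced (possibly reducible) divisor $D$ to the irreducible situation already treated in Theorem \ref{THM:A}, by analyzing the structure of $D$ using the results on Ueda type developed in Section \ref{S:closed}. First I would set up the situation: suppose $(S',D')$ is another projective pair with a formal isomorphism $\widehat\varphi : \mathscr D \to \mathscr D'$ between the formal completions. Write $D = \sum_{i} C_i$ as a sum of its irreducible components, each a smooth curve (or at worst nodal) with $\mathcal O_S(D)_{|D}\simeq\mathcal O_D$. The triviality of the normal bundle of the whole divisor $D$ gives, upon restriction, information on each component and on the way the components meet, and the formal isomorphism transports all of this to $D'$; in particular the dual graph of $D'$ matches that of $D$.

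The key dichotomy is whether $\utype(D)$ is finite or infinite. If $\utype(D)=\infty$, then by Lemma \ref{L:log unitary} there is a closed logarithmic form with residue $D$ and purely imaginary periods on $\mathscr D$; combined with Theorem \ref{T:Uedainfty} this forces $D$ (or a multiple) to be a fiber of a fibration, and one concludes via \cite{Hirschowitz} exactly as in the fibration case of Theorem \ref{THM:A}. If instead $\utype(D)<\infty$, then Corollary \ref{C:UedaProjective} applies to $D$: I would extend its conclusions from the smooth irreducible case to the reduced case (the cited results of Ueda in \cite{UedaSingular} are stated precisely for divisors, so Items (\ref{I:Narasimhan})--(\ref{I:forms}) remain available). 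This gives holomorphic convexity of $S-D$, extension of meromorphic functions and sections, and extension of closed holomorphic $1$-forms. The strategy is then to reproduce the argument of Theorem \ref{THM:A}: produce a closed holomorphic $1$-form $\alpha$ generically transverse to $D$ from a class $a\in H^1(S,\mathcal O_S)$ restricting to the Ueda class, produce a closed rational $1$-form $\omega$ with polar divisor supported on $D$ from Theorem \ref{THM:closed}, pin down $\mathbb C\cdot\omega$ canonically via periods, apply Lemma \ref{L:converge} to obtain convergence of $\widehat\varphi$, and then extend to a rational map using Item (\ref{I:sections}).

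The main obstacle I anticipate is the local convergence step, i.e. the adaptation of Lemma \ref{L:converge}. That lemma is stated for a formal biholomorphism of $\widehat{(\C^2,0)}$ preserving two exact meromorphic $1$-forms; here $D$ is not smooth, so at a node of $D$ the local picture is a formal automorphism fixing the two branches, and the forms $\alpha,\omega$ have a more complicated polar structure. I would need to check that the pair $(\alpha,\omega)$ still satisfies the transversality hypothesis $\alpha\wedge\omega\not\equiv 0$ along each component and near each singular point of $D$, so that Lemma \ref{L:converge} (or a mild local variant of it) can be invoked at every point of $D$ to conclude that $\widehat\varphi$ is convergent on all of $\mathscr D$. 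The global gluing of these local convergence statements into a single germ of biholomorphism, together with the final birationality argument excluding extra components of the preimage of $D$ via Item (\ref{I:Narasimhan}), should then go through as before; the genuinely new work is confined to the behavior at the singular points of the reducible divisor.
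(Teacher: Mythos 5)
Your treatment of the finite-type case --- producing $\alpha$ from a class $a\in H^1(S,\mathcal O_S)$ that restricts to the Ueda-type cocycle, taking the closed rational $1$-form $\omega$ from Theorem~\ref{THM:closed} (which is indeed stated for divisors, not just smooth curves), normalizing $\mathbb C\cdot\omega$ by its periods, and invoking Lemma~\ref{L:converge} to get convergence of $\widehat\varphi$ --- is exactly the paper's proof, which consists of the single remark that ``the proof is the same as the proof of Theorem~\ref{THM:A}''. Also, your anticipated obstacle at the nodes of $D$ is not a real one: Lemma~\ref{L:converge} is purely local at a point of $(\mathbb C^2,0)$ and makes no reference to the divisor; it applies verbatim at a singular point of $D$, since $\alpha$ and $\omega$ are globally defined closed forms (hence locally exact, $\omega$ having no residues), and $\alpha\wedge\omega$ is a global rational $2$-form on the irreducible surface $S$, so its non-vanishing need only be checked once, not point by point near each singularity.

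The genuine gap is your claim that Corollary~\ref{C:UedaProjective} ``remains available'' for reduced divisors because \cite{UedaSingular} treats divisors. It does not: that corollary rests on Theorem~\ref{T:Ueda} (existence of strictly pseudoconcave neighborhoods), which Ueda proved only for smooth curves, and the paper says explicitly that this is not available ``for reduced divisors (or even for singular curves)''; its extension is posed as an open problem (Problem~\ref{pbm:Ueda for divisors}), toward which \cite{UedaSingular} and \cite{Koike} give only partial results. Consequently your final steps --- extending the convergent isomorphism to a rational map via Item~(\ref{I:sections}) and excluding extra components via Item~(\ref{I:Narasimhan}) --- cannot be justified, and this is precisely why the theorem asserts only the projective formal principle while Theorem~\ref{THM:A} also asserts the birational one. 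Fortunately those steps are superfluous for the statement: the projective formal principle asks only that the germs be biholomorphic, which Lemma~\ref{L:converge} already delivers, so you should stop there rather than try to prove more. A smaller issue of the same kind: Theorem~\ref{T:Uedainfty} is also stated only for smooth hypersurfaces, so your infinite-type case needs a substitute argument. One way: in that case Theorem~\ref{THM:closed}(1) yields a \emph{convergent} global closed logarithmic $1$-form $\omega$ with $\Res\omega_{|U}=D$ and purely imaginary periods; the hypothesis $\mathcal O_S(D)_{||D|}\simeq\mathcal O_{|D|}$ then forces the unitary character of $\pi_1(U)\simeq\pi_1(|D|)$ defined by the periods of $\omega$ to be trivial, so the multivalued primitive $\exp\left(\int\omega\right)$ is single-valued near $|D|$ and exhibits $D$ as a fiber of a germ of fibration, after which \cite{Hirschowitz} applies as in the fibration case of Theorem~\ref{THM:A}.
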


The proof is the same as the proof of Theorem \ref{THM:A}. The only difference is that we
do not have available Theorem \ref{T:Ueda} for reduced divisors (or even for singular curves)
and we are not able to conclude the birational formal principle when $D$ does not move in a fibration.

\begin{problem}\label{pbm:Ueda for divisors}
    Establish versions of Ueda's Theorem \ref{T:Ueda} where the smooth curve $C$ is replaced by an arbitrary
    effective divisor with trivial, torsion, or unitary flat normal bundle.
\end{problem}

Partial results toward a positive solution to Problem \ref{pbm:Ueda for divisors} have been obtained by Ueda \cite{UedaSingular} and Koike \cite{Koike}.

\subsection{Beyond curves with trivial normal bundle}
It is conceivable that variants of the arguments used to prove Theorem \ref{THM:A} will
lead to a more general statement. Probably, the main obstruction to extending the argumentation
is our deficient understanding of the following question.

\begin{question}
    Let $Y \subset X$ be a smooth hypersurface  on a projective manifold $X$ with numerically trivial normal bundle and  $\utype(C)<\infty$. If the order of the normal bundle of $Y$ in $\Pic^{\tor}(Y/X)$ is finite, does there exist a foliation or a web on $X$ canonically attached to the pair $(X,Y)$ ?
\end{question}

A similar question was already raised in \cite[Question 4.6]{CLPT}.

\bibliography{references}{}
\bibliographystyle{amsplain}

\end{document}